\newtheoremstyle{mystyle}
    {1em}{}{\it}{}{\bf}{.}{12pt}{}
\theoremstyle{mystyle} %
\newtheorem{theorem}{Theorem}[section]
\newtheorem{lemma}[theorem]{Lemma}
\renewcommand{\proofname}{Proof}
\newtheorem{prop}[theorem]{Proposition}
\renewenvironment{proof}[1][\proofname]{\par
  \pushQED{\qed}
  \normalfont \topsep6\p@\@plus6\p@\relax
  \trivlist
  \item[\hskip\labelsep
        \itshape
     {\bf{#1}\@addpunct{.}\ }]
}{\popQED\endtrivlist\@endpefalse}
\def\({\left(}
\def\){\right)}
\def\red{%\textcolor{red}
}
\g@addto@macro\bfseries{\boldmath}
\begin{document}
\title[Sharp Uncertainty Principle inequality for solenoidal fields]{Sharp Uncertainty Principle inequality\\ for solenoidal fields}
\author[N. Hamamoto]{Naoki Hamamoto}
\address{\parbox{\linewidth}{Department of Mathematical Sciences, Osaka Prefecture University, 
 \\
Sakai, Osaka 599-8531, Japan
}}
\email{yhjyoe@yahoo.co.jp 
{\rm (N. Hamamoto)}}
\begin{abstract}
This paper solves the $L^2$ version of Maz'ya's open problem \cite[Section~3.9]{Mazya_75} on the sharp uncertainty principle inequality
\[ \int_{\mathbb{R}^N}|\nabla {\bm u}|^2dx\int_{\mathbb{R}^N}|{\bm u}|^2|{\bm x}|^2dx\ge C_N\(\int_{\mathbb{R}^N}|{\bm u}|^2dx\)^2\]
 for solenoidal (namely divergence-free) vector fields ${\bm u}={\bm u}({\bm x})$ 
on $\mathbb{R}^N$. 
The best value of the constant turns out to be $C_N=\frac{1}{4}\(\sqrt{(N-2)^2+8}+2\)^2$ which exceeds the original value $N^2/4$ for unconstrained fields. Moreover, we show the attainability of $C_N$ and specify the profiles of the extremal solenoidal fields: for $N\ge4$, the extremals are proportional to a poloidal field that is axisymmetric and unique up to the invariant transformation the axis of symmetry \red{and the scaling transformation}; for $N=3$, there additionally exist extremal toroidal fields.  %To derive the expression of the new best constant, we decompose the best constant into poloidal-toroidal and spherical harmonics components, by applying the ideas of Weck \cite{Weck} and Cazacu-Flynn-Lam \cite{CFL_HUP} respectively. After this procedure, we arrive at a one-dimensional minimization problem which still includes an unavoidable difficulty. A key tool for overcoming it is the use of the generalized Laguerre polynomials.
\end{abstract}
\subjclass[2010]{26D10 (Primary);  35A23, 26D15, 81S07 (Secondary)}
\keywords{Uncertainty Principle inequality, Solenoidal, Poloidal, Toroidal, Spherical harmonics, Best constant, Laguerre polynomials, Confluent hypergeometric function of the first kind}
\maketitle
\section{Introduction}
We study the functional inequality called the  Heisenberg's Uncertainty Principle inequality for real (not complex) vector fields on $\mathbb{R}^N$,  with focus on how the best constant is changed by imposing differential constraints on the test vector fields.
\subsection{Basic notations}
\label{subsec:BN}
Throughout this paper, we use bold letters to denote vectors in the $N$-dimensional Euclidean space, e.g. ${\bm x}=(x_1,x_2,\cdots,x_N)\in\mathbb{R}^N$. 
By writing ${\bm u}=(u_1,u_2,\cdots,u_N)\in C^\infty(\Omega)^N$, we mean that\[{\bm u}:\Omega\to\mathbb{R}^N,\qquad {\bm x}\mapsto {\bm u}({\bm x})=(u_1({\bm x}),\cdots,u_N({\bm x}))\]
is a vector field on $\Omega$ with the components $\{u_1,\cdots,u_N\}\subset C^\infty(\Omega)$; the same also applies to other spaces: e.g., ${\bm u}\in C_c^\infty(\Omega)^N$ means that $\{u_1,\cdots,u_N\}\subset C_c^\infty(\Omega)$, i.e., ${\bm u}$ is a smooth vector field with compact support on $\Omega$. 
We use the gradient operator $\nabla=\big(\frac{\partial}{\partial x_1},\cdots,\frac{\partial}{\partial x_N}\big)$ not only on scalar fields but also vector fields ${\bm u}$: we view $\nabla {\bm u}=\big(\frac{\partial u_j}{\partial x_k}\big)_{j,k\in\{1,\cdots,N\}}:\Omega\to\mathbb{R}^{N\times N}$ as a matrix field. The notation ${{\bm x}\cdot {\bm y}=\sum_{k=1}^Nx_ky_k}$ denotes the standard scalar product of two vectors, which induces $|{\bm x}|=\sqrt{{\bm x}\cdot {\bm x}}$ the absolute value of ${\bm x}$; the same also applies to matrix fields, e.g.,  \[\nabla {\bm u}\cdot\nabla {\bm v}=\sum_{j,k}\frac{\partial u_j}{\partial x_k}\frac{\partial v_j}{\partial x_k}\quad\text{ and }\quad |\nabla {\bm u}|^2=\sum_{j,k}\Big(\frac{\partial u_j}{\partial x_k}\Big)^2.\] 
\subsection{Motivation and related known results}
Heisenberg's Uncertainty Principle \cite{Heisenberg_1927} states in quantum mechanics that the observed values of the position and momentum of a particle cannot be determined at the same time. This was mathematically formulated as an inequality called the Heisenberg-Pauli-Weyl Uncertainty Principle (or shortly HUP, see \cite{Weyl_1931}), which asserts that a wave function and its Fourier transform cannot be simultaneously sharply localized at a single point. 
In the present paper,  we employ the expression of HUP inequality in terms of real vector fields on $\mathbb{R}^N$, which is precisely described as follows:
%The HUP inequality can also be viewed as a special case of Caffarelli-Kohn-Nirenberg inequality \cite{CKN}; it has been intensively studied in some areas of mathematics, in particular from the viewpoint of the sharpness of the inequality (see e.g. \cite{Costa}). In terms of real vector fields on $\mathbb{R}^N$, the HUP inequality 
\begin{equation}
 \int_{\mathbb{R}^N}|\nabla {\bm u}|^2dx \int_{\mathbb{R}^N}|{\bm u}|^2|{\bm x}|^2dx\ge C_N
  \(\int_{\mathbb{R}^N}|{\bm u}|^2dx\)^2
 \label{UP0}
\end{equation}
for ${\bm u}={\bm u}({\bm x})\in C^\infty(\mathbb{R}^N)^N$ with a suitable integrability condition. 
Here the constant 
\begin{equation}
 C_N=N^2/4
\label{CN_un}
\end{equation}
is known to be sharp for unconstrained vector fields ${\bm u}$ and attained when the components $\{u_1,\cdots,u_N\}$ have the Gaussian profile, that is, each of them is proportional to $e^{-c|{\bm x}|^2}$ for some constant $c>0$ (see e.g. \cite{Folland_UP}). Note that the inequality \eqref{UP0} is equivalent to its scalar-field version, since the former easily holds by applying the latter to each of the components of ${\bm u}$ with the aid of Cauchy-Schwarz inequality; for the details, see the discussion in \cite{CFL_HUP}. 

Now, a non-trivial problem occurs when ${\bm u}$ is subject to some differential constraint,  asking how much is the new best value of $C_N$ larger than \eqref{CN_un}.  
When ${\bm u}$ is assumed to be curl-free, namely when ${\bm u}=\nabla \phi$ holds with some scalar potential $\phi$, the answer to the problem was recently obtained by Cazacu-Flynn-Lam  \cite{CFL_HUP}:  
it was shown for all $N\ge1$ that the inequality \eqref{UP0} holds with the sharp constant  
\begin{equation}
C_N=\frac{1}{4}(N+2)^2
\label{CN_CFL}
\end{equation} 
for  curl-free fields ${\bm u}$, 
 and that the sharp value is attained when  $\phi$ has the Gaussian profile.  
This result gives a curl-free improvement of the HUP inequality, in the sense that the best value of $C_N$ gets larger by restricting the test vector fields to curl-free fields. 
In the case $N=2$, as is well known, the curl-free fields are isometrically isomorphic to solenoidal (namely divergence-free) vector fields. Hence the result of Cazacu-Flynn-Lam  also solves the problem of finding the best value of $C_2$ for solenoidal fields, as a special case of the question asked by Maz'ya in the $L^2$ setting which reads as follows: 

\

\noindent
{\bf Open Problem} (Maz'ya \cite[Section~3.9]{Mazya_75}).   
{\it Find the new best value of the constant $C_N$ in the inequality \eqref{UP0} %
 when ${\bm u}$ is assumed to be solenoidal.
} 

\

This problem has attracted a number of mathematicians' interest and has an exotic feel in the following sense: the uncertainty principle is a concept of quantum mechanics, while solenoidal vector fields are objects of fluid dynamics or electromagnetism; the two are quite different from each other, and the solenoidal condition is usually not considered as a physical property of wave functions of particles in quantum mechanics; nevertheless, in mathematics, the above open problem does not hesitate to connect the two things, which could also be expected to bring a new unknown physical meaning.

From the viewpoint of mathematical analysis, the open problem is considered as a solenoidal improvement of the HUP inequality. 
Historically, the solenoidal improvement of functional inequalities probably first appears in the article \cite{Costin-Mazya} by Costin-Maz'ya, who derived the sharp Hardy inequality
\[
 \int_{\mathbb{R}^N}|\nabla {\bm u}|^2dx\ge \(\frac{N-2}{2}\)^2\frac{(N+2)^2}{(N+2)^2-8}\int_{\mathbb{R}^N}\frac{|{\bm u}|^2}{|{\bm x}|^2}dx
\]
%(and also its weighted version) 
for axisymmetric solenoidal fields ${\bm u}$; here the constant $\(\frac{N-2}{2}\)^2\frac{(N+2)^2}{(N+2)^2-8}$ is sharp, which improves the original best value $\(\frac{N-2}{2}\)^2$ in the same inequality (for unconstrained fields) found by Leray \cite{Leray}  for $N\ge3$ and Hardy \cite{Hardy} for $N=1$. Concerning such an improvement, Costin-Maz'ya's result was further refined in a series of recent papers \cite{swirl_CPAA,3D_NA,HL_pre,HL-const_AdM}; in particular,  the removal of the axisymmetry condition on the test solenoidal fields was achieved without changing the best constant, which also includes a solution to Maz'ya's another open problem \cite[Section~9.4]{Mazya_75}. In addition, other inequalities including Rellich inequality were found in \cite{CF_MAAN,CF_JFA,RL_div,RL_CVPD,CF-RH_pre} with the new best constant for curl-free or solenoidal fields. (Incidentally, see also \cite{CF_MIA} for the two-dimensional logarithmic weighted Hardy inequality.) 
In any case,  the best constants in such inequalities have turned out to be computable for solenoidal fields on $\mathbb{R}^N$. On the other hand, as mentioned in e.g. \cite{Brezis-Vazquez},  Hardy's inequality can also be considered as another kind of HUP; to see a rough picture of it, suppose that ${\bm u}({\bm x})$ is sharply localized at ${\bm x}={\bm 0}$, then the integral $\int_{\mathbb{R}^N}\frac{|{\bm u}|^2}{|{\bm x}|^2}dx$ and hence $\int_{\mathbb{R}^N}|\nabla {\bm u}|^2dx$ gets larger. Then we may expect that the same computability also applies to the best constant in HUP inequality (henceforth, ``best HUP constant'' for short). % (namely the best constant in HUP inequality) for solenoidal fields, which is our main theme.

\subsection{Main result}
Motivated by the observation above, we try to solve the aforementioned open problem for $N\ge3$. 
We always assume that the test solenoidal fields ${\bm u}$ are regular, in the sense that the three integrals in the HUP inequality are finite,  which we express as the finiteness of the norm%
\begin{equation}
\begin{aligned}
 \|{\bm u}\|_{\mathcal{D}}&:=\(\int_{\mathbb{R}^N}|{\bm u}|^2dx+\int_{\mathbb{R}^N}|{\bm u}|^2|{\bm x}|^2dx+\int_{\mathbb{R}^N}|\nabla {\bm u}|^2dx\)^{1/2}.
\end{aligned}
\label{norm_D}
\end{equation}
We denote by $\mathcal{D}(\mathbb{R}^N)$ the Hilbert space completion of $C_c^\infty(\mathbb{R}^N)^N$ with respect to the norm $\|\cdot\|_{\mathcal{D}}$. 
Now, our main result reads as follows: %
\begin{theorem}
\label{theorem}
Let ${\bm u}\in \mathcal{D}(\mathbb{R}^N)$ be a solenoidal field. %
Then the inequality \eqref{UP0} holds with the best constant 
\[
 C_N=\frac{1}{4}\(\sqrt{(N-2)^2+8}+2\)^2.
\]
Moreover, $C_N$ is attained in $\mathcal{D}(\mathbb{R}^N)\setminus\{{\bm 0}\}$. 
\end{theorem}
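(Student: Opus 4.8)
The plan is to reduce \eqref{UP0} to a family of one-dimensional weighted inequalities by combining the poloidal--toroidal decomposition of solenoidal fields with an expansion into spherical harmonics, to solve each one-dimensional problem sharply, and then to reassemble the estimates by a Cauchy--Schwarz argument.

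First I would decompose an arbitrary solenoidal ${\bm u}\in\mathcal{D}(\mathbb{R}^N)$ as ${\bm u}={\bm u}_{\mathrm{pol}}+{\bm u}_{\mathrm{tor}}$ and expand each summand into components indexed by the degree $k\ge1$ of a spherical harmonic $Y_k$ on $S^{N-1}$ (so that $-\Delta_{S^{N-1}}Y_k=\lambda_k Y_k$ with $\lambda_k=k(k+N-2)$). The first lemma to establish is that these modal components are mutually orthogonal \emph{simultaneously} with respect to the three bilinear forms $\int_{\mathbb{R}^N}{\bm u}\cdot{\bm v}\,dx$, $\int_{\mathbb{R}^N}{\bm u}\cdot{\bm v}\,|{\bm x}|^2dx$ and $\int_{\mathbb{R}^N}\nabla{\bm u}\cdot\nabla{\bm v}\,dx$. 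Granting this, write $A=\sum_m A_m$, $B=\sum_m B_m$, $C=\sum_m C_m$ for the three integrals in \eqref{UP0} and their modal decompositions (all terms nonnegative). If one proves the modal inequality $A_mB_m\ge C_N\,C_m^2$ for every $m$, then
\[
C=\sum_m C_m\le\frac{1}{\sqrt{C_N}}\sum_m\sqrt{A_m}\,\sqrt{B_m}\le\frac{1}{\sqrt{C_N}}\Bigl(\sum_m A_m\Bigr)^{1/2}\Bigl(\sum_m B_m\Bigr)^{1/2}=\sqrt{AB/C_N},
\]
which is exactly \eqref{UP0}. Hence it suffices to treat one mode at a time, and equality in \eqref{UP0} can occur only for a field concentrated on a single mode.

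For a fixed mode the vector field is governed by a single radial profile $g=g(r)$ (together with the radial profile that $\operatorname{div}{\bm u}=0$ forces on the radial component in the poloidal case). Performing the angular integrations turns the three integrals into one-dimensional integrals of the schematic form
\[
A_m=\int_0^\infty\!\!\bigl(a_k|g'|^2+b_k\,r^{-2}|g|^2+\cdots\bigr)r^{N-1}dr,\quad B_m=\int_0^\infty\!\!(\cdots)\,r^{N+1}dr,\quad C_m=\int_0^\infty\!\!(\cdots)\,r^{N-1}dr,
\]
with explicit $k,N$-dependent constants, the integrand of $A_m$ involving also $|g''|^2$ in the poloidal case. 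Using $\sqrt{A_mB_m}=\inf_{t>0}\tfrac12\bigl(tA_m+t^{-1}B_m\bigr)$, minimising $A_mB_m/C_m^2$ amounts to minimising over $t>0$ the bottom of the spectrum of a Schr\"odinger-type operator in $r$; after the substitution $s=r^2$ this operator is of Kummer (confluent hypergeometric) type, so its eigenfunctions are Laguerre polynomials times a Gaussian and its eigenvalues are explicit. Optimising the lowest eigenvalue in $t$ then produces a quadratic equation for the best modal constant; one checks that its minimum over all $k\ge1$ and over both the poloidal and the toroidal families is attained by the poloidal mode with $k=1$, for which one is led to the relation $\bigl(C_N-1-(N-2)^2/4\bigr)^2=4C_N$, equivalently $C_N=\tfrac14\bigl(\sqrt{(N-2)^2+8}+2\bigr)^2$, while every toroidal mode gives a strictly larger value except when $N=3$, in which case the $k=1$ toroidal mode ties. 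Attainability then follows at once: the minimiser of the $k=1$ poloidal one-dimensional problem is an explicit Gaussian-type profile, and assembling it gives an axisymmetric poloidal field (plus an extra toroidal extremal when $N=3$) lying in $\mathcal{D}(\mathbb{R}^N)\setminus\{{\bm 0}\}$ and realising equality in \eqref{UP0}; uniqueness up to rotation of the axis and scaling follows from the simplicity of the lowest Kummer eigenvalue.

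The main obstacle I anticipate is twofold. The first difficulty is the honest evaluation of $\int_{\mathbb{R}^N}|\nabla{\bm u}|^2dx$ after the decomposition: since $\nabla$ acting on a vector field feels the curvature of the spheres, the poloidal ansatz generates cross terms between $g$, $g'$, $g''$ and between the radial and tangential parts, and all of them must be kept (it is not enough to control $|\nabla|{\bm u}||^2$) in order to land on the exact constant. The second and more delicate difficulty is proving that the infimum over all modes is really the $k=1$ poloidal value: this needs explicit control of the lowest Kummer eigenvalue as a function of $k$ and $N$, a monotonicity argument in $k$, and the comparison between the poloidal and toroidal families that singles out $N=3$. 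The remaining points --- the Cauchy--Schwarz reassembly, the approximation of a general solenoidal $\mathcal{D}$-field by smooth compactly supported solenoidal ones, and the check that the candidate extremal has finite $\mathcal{D}$-norm --- are routine.
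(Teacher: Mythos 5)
Your strategy reproduces the paper's architecture almost exactly: poloidal--toroidal decomposition, spherical-harmonic modes, simultaneous orthogonality of the three quadratic forms, Cauchy--Schwarz reassembly of the modal inequalities, reduction of each mode to a one-dimensional weighted problem after the substitution $s=r^2$, monotonicity of the modal constant in the degree, and the poloidal/toroidal comparison that singles out the $N=3$ tie. Your quadratic relation $(C_N-1-(N-2)^2/4)^2=4C_N$ and your description of the extremals agree with what the paper proves. (A minor divergence: the paper disposes of the toroidal part not by a modal 1D analysis but componentwise, applying the scalar HUP inequality for functions of zero spherical mean to each Cartesian component; either route should work.)

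The genuine gap is in the step you treat as routine, which is exactly the step the paper flags as the main difficulty. After the angular integration and the substitutions $f=r^{\nu-1}g$, $s=r^2$, the modal quantity to minimize is $R[g]=Q[g]P_1[g]/(P_0[g])^2$ with $Q[g]=\int_0^\infty(g'')^2x^{\mu+1}dx$, $P_1[g]=\int_0^\infty(x^2(g')^2-\varepsilon g^2)x^{\mu-1}dx$ and denominator $P_0[g]=\int_0^\infty(g')^2x^{\mu}dx$, where $\mu=N/2+\nu$ and $\varepsilon=(\nu+N-2)/2$. Your scaling step correctly replaces $2\sqrt{Q P_1}$ by $Q+P_1$, but the resulting quotient $(Q[g]+P_1[g])/P_0[g]$ is \emph{not} the Rayleigh quotient of a Schr\"odinger-type operator: its Euler--Lagrange equation is fourth order and the denominator is a weighted norm of $g'$, not of $g$. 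The obvious two-stage shortcut --- a Hardy inequality to absorb the term $-\varepsilon\int_0^\infty g^2x^{\mu-1}dx$ into $\int_0^\infty(g')^2x^{\mu+1}dx$, followed by a genuine second-order eigenvalue problem for $g'$ --- loses the constant, because the two inequalities cannot be saturated simultaneously; it yields $(\mu+1)\sqrt{\mu^2-4\varepsilon}/\mu$ in place of the sharp $\sqrt{\mu^2-4\varepsilon}+1$. The paper closes this by a sum-of-squares identity: setting $I[g]=\int_0^\infty(xg''+(x+\mu)g'+(\mu-b)g)^2x^{\mu-1}dx$ with $b=(\mu-\sqrt{\mu^2-4\varepsilon})/2$, an integration by parts gives $I[g]=Q[g]+P_1[g]-(\sqrt{\mu^2-4\varepsilon}+1)P_0[g]$, so nonnegativity of $I$ is the sharp inequality and $I[g]=0$ forces $g$ to solve a second-order ODE which, after conjugation by $e^{-x}$, is Kummer's equation. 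Consequently the extremal profile is $e^{-\lambda s}\,{}_1F_1(b,\mu,\lambda s)$ with $b$ generically non-integer --- a genuinely transcendental confluent hypergeometric function, not a Laguerre polynomial times a Gaussian as you assert. Without this identity (or some equivalent certificate for the fourth-order variational problem), your plan does not close at its crucial point.
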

{\it Remark.} In Section \ref{sec:concl}, the solenoidal fields ${\bm u}\not\equiv {\bm 0}$ satisfying the equality sign in \eqref{UP0}, which we say {\it extremal}, will be specified; as a result, they are classified into two profiles when $N=3$ and only one when $N\ge4$.  
																																																	For both the cases, there exist extremal poloidal fields (in the sense of the so-called poloidal-toroidal fields in Section \ref{sec:VC}) that are proportional to an axisymmetric solenoidal field uniquely determined up to the axis of rotation and scaling transform (see \eqref{extremal_pol}). For $N=3$, there also exist extremal toroidal fields (see \eqref{extremal_tor}). % of the form \begin{equation}  {\bm u}({\bm x})=\({\bm c}_1\cdot {\bm x},\ {\bm c}_2\cdot {\bm x},\ {\bm c}_3\cdot {\bm x}\)e^{-c|{\bm x}|^2},\end{equation} where $c$ is any positive constant number  and $\{{\bm c}_1,{\bm c}_2, {\bm c}_3\}\subset\mathbb{R}^3$ are any $3$ constant vectors with $({\bm c}_1,{\bm c}_2,{\bm c}_3)\in\mathbb{R}^{3\times 3}$ being an antisymmetric matrix. 

In a comparison between the three HUP best constants shown above for unconstrained, solenoidal or curl-free fields, it is easy to check that 
\[
 \frac{1}{4}N^2<\frac{1}{4}\(\sqrt{(N-2)^2+8}+2\)^2\le \frac{1}{4}(N+2)^2, 
\]
%where the equality in the second inequality holds only when $N=3$. 
or that the value of  $C_N$ in Theorem~\ref{theorem} stays between \eqref{CN_un} and \eqref{CN_CFL}. This result agrees with that the solenoidal condition is weaker than the curl-free one in view of HUP, in the sense that the solenoidal condition consists of only one differential equation while the curl-free condition requires many (or exactly $\frac{1}{2}N(N-1)$) equations. 
\subsection{Overview of this paper}
In the rest of the present paper, we focus on the proof of Theorem~\ref{theorem} %
that consists of the following five parts:
\begin{itemize}
 \item Section \ref{sec:VC}. Poloidal-toroidal (or shortly PT) decomposition ${\bm u}={\bm u}_P+{\bm u}_T$ of solenoidal fields ${\bm u}$.
\item Section \ref{sec:tor}. Computation of the best HUP constant for toroidal fields ${\bm u}_T$.
\item Section \ref{sec:pol}. Integral calculation for spherical harmonics components of ${\bm u}_P$.
\item Section \ref{sec:Laguerre}. One-dimensional minimization problem for the functional of the form
\[ R [g]=
      \frac{\int_0^\infty
      (\red{g''})^2x^{\mu+1}dx\int_0^\infty
\({x}^2(\red{g'})^2  -\varepsilon g^2\){{x}}^{\mu-1}d{x}}
      {\int_0^\infty
      (\red{g'})^2 {{x}}^{\mu }d{x}}\]
on the set of test functions $g=g(x):[0,\infty)\to\mathbb{R}$ with a suitable regularity condition, where $\mu$ and $\varepsilon$ are positive parameters in some ranges.
\item Section \ref{sec:concl}. Conclusion.
\end{itemize}

The PT decomposition theorem in Section \ref{sec:VC} historically originates from  G.~Backus \cite{Backus} for $N=3$ and was generalized by N.~Weck \cite{Weck} to differential forms on $\mathbb{R}^N$. We use their results in the framework of the standard vector calculus (instead of differential forms) to separate the calculation of $C_N$ into two computable parts, namely the best HUP constants $C_{P,N}$ and $C_{T,N}$ for poloidal and toroidal fields, respectively; the same technique was also used in the works \cite{3D_NA,HL_pre,RL_CVPD} on sharp Hardy or Rellich inequality for solenoidal fields. 

The computation of $C_{T,N}$ in Section \ref{sec:tor} will be carried out by reducing it to the problem of finding the best HUP constant for scalar fields with zero spherical mean. 

The calculation of $C_{P,N}$ in Section \ref{sec:pol} will be carried out by estimating the integrals in \eqref{UP0} for ${\bm u}={\bm u}_P$ along the idea of Cazacu-Flynn-Lam \cite{CFL_HUP} that originates from Tertikas-Zographopoulos \cite{Tertikas-Z}, who used the transformation of type $\phi({\bm x})\mapsto |{\bm x}|^{\lambda}\phi({\bm x})$ with the exponent $\lambda$ varying corresponding to the spherical harmonics components of test fields. By way of this procedure, the estimation for $C_{P,N}$ can be reduced to one-dimensional minimization problem.

\red{The main difficulty lies in Section \ref{sec:Laguerre} where we try to solve the minimization problem for the aforementioned functional $R[g]$. Problems of the same or more general type also appear in recent papers (e.g. \cite{CFL_HUP,CKN_second}), which seem to remain unsolved. 
To overcome the difficulty in our case, 
%we could make use of ``Laguerre polynomial approach'' as in \cite{HUP_pre}; however, it takes long calculation. Instead, we give a simpler
we make use of a simple calculation method for the $L^2$ integral \eqref{I[g]} of Kummer's equation.  %This is an analogy to the method provided by Costa \cite{Costa} who found a simple approach to derive the best constant in Caffarelli-Kohn-Nirenberg inequality. 
However, it should also be noted that the discovery of such a method is based on a long and complicated computation of $R[g]$, carried out in the previous version of the preprint \cite{HUP_pre} by expressing $g$ in the (general) Laguerre polynomial expansion, where we succeeded in finding that the extremal function of $R[g]$ must be of Kummer type. % to further guess that a simple calculation of the $L^2$ form such as \eqref{I[g]} directly solves the minimization problem.}
%Indeed, it is well known that the Laguerre polynomials appear in the radial part of stationary solutions to the Schr\"{o}dinger equation of one electron in a Coulomb potential (e.g. hydrogen atom); since the functional $R[g]$ could be considered as a radial part of HUP concerning states of one particle, it seems also natural to express $g$ in terms of the (generalized) Laguerre polynomials.  By way of such an approach, also with the aid of ``scale invariance'' property of $R[g]$, the problem of finding its minimum value can be reduced to\[ \inf_{\{\gamma_0,\gamma_1,\gamma_2\cdots\}\subset\mathbb{R}}\frac{\sum_k \(A_k\gamma_k^2+E_k\gamma_k\gamma_{k+1}\)}{\sum_k\(B_k\gamma_k^2+F_k\gamma_k\gamma_{k+1}\)}\]which is a minimization problem on a sequence space. We will try to solve it in a straightforward way by a method of completing squares, and consequently the minimizers of $R[g]$ will turn out to be proportional to a Kummer confluent hypergeometric function multiplied by an exponential factor.
\section{Preliminary: poloidal-toroidal decomposition 
} 
\label{sec:VC}
After preparing basic notations (in addition to \ref{subsec:BN}), we briefly review poloidal-toroidal fields in the framework of the standard vector calculus on $\dot{\mathbb{R}}^N$, as a specialization of Weck's work \cite{Weck}. We omit the proofs of the elementary facts related to the PT fields, since they have already been fully discussed in the previous papers; see \cite{HL_pre,RL_CVPD,CF_JFA} for the details. By using the PT decomposition theorem,  the best HUP constant can be separated into two computable parts, as a consequence of the $L^2(\mathbb{S}^{N-1})$ orthogonality of PT fields. 
\subsection{Basic notations}
\label{subsec:notation}
We make the identification $\dot{\mathbb{R}}^N\cong \mathbb{R}_+\times\mathbb{S}^{N-1}$, in the sense that $\dot{\mathbb{R}}^N=\mathbb{R}^N\setminus\{{\bm 0}\}$ is a smooth manifold diffeomorphic to  the product of the $(N-1)$-sphere $\mathbb{S}^{N-1}=\left\{{\bm x}\in\mathbb{R}^N\ ;\ |{\bm x}|=1\right\}$ and the open half line $\mathbb{R}_+=\left\{{r}\in\mathbb{R}\ ;\ {r}>0\right\}$, and that every point ${\bm x}$ in $\dot{\mathbb{R}}^N$ is uniquely expressed as ${\bm x}={r}{\bm \sigma}$ in terms of the radius (namely radial  coordinate) ${r}=|{\bm x}|>0$ and the unit vector ${\bm \sigma}={\bm x}/|{\bm x}|\in\mathbb{S}^{N-1}$. 

For every vector field ${\bm u}%
={\bm u}({\bm x}):\dot{\mathbb{R}}^N\to \mathbb{R}^N$,
there exists an unique pair of a scalar field $u_R$ and a vector field ${\bm u}_S$ satisfying
\begin{equation}
 {\bm u}= {\bm \sigma}u_R+{\bm u}_S\quad\text{ and }\quad {\bm \sigma}\cdot{\bm u}_S=0\qquad\text{on }\dot{\mathbb{R}}^N,
\label{RS}  
\end{equation}
and these fields have the explicit expressions $u_R={\bm \sigma}\cdot {\bm u}$ and ${\bm u}_S={\bm u}-{\bm \sigma}u_R$ 
which we call the radial component and the spherical part of ${\bm u}$, respectively. 

The gradient operator $\nabla=\big(\frac{\partial}{\partial x_1},\cdots,\frac{\partial}{\partial x_N}\big)$ and the Laplacian $\triangle=\sum_{k=1}^N (\frac{\partial}{\partial x_k})^2$ can be decomposed into radial-spherical parts as
\begin{equation}
\label{grad-Lap}
  \nabla =  {\bm \sigma} \partial_{r} + \frac{1\,}{{r}\,}\nabla_{\!\sigma}
\quad\text{ and }\quad
\triangle=\partial_{r}'\partial_{r}+\frac{1}{{r}^2}\triangle_\sigma,
\end{equation}
where $\nabla_{\!\sigma}$ and $\triangle_\sigma$ denote the spherical gradient and the spherical Laplacian (or Laplace-Beltrami operator on $\mathbb{S}^{N-1}$), respectively, and where 
\begin{equation}
\partial_{r}:={\bm \sigma}\cdot\nabla=\sum_{k=1}^N \frac{x_k}{|{\bm x}|}\frac{\partial}{\partial x_k}
 \quad\text{ and }\quad
 \partial_{r}'=\partial_{r}+\frac{N-1}{{r}}
\end{equation}
are (in this order) the radial derivative and its skew $L^2$ adjoint, in the sense that
\[
 \int_{\mathbb{R}^N}f\partial_{r}g\hspace{0.1em}dx=-\int_{\mathbb{R}^N}(\partial_{r}'f)g\hspace{0.1em}dx
 \qquad\forall f,g\in C^\infty(\dot{\mathbb{R}}^N).
\]
Applying the gradient formula in \eqref{grad-Lap} to a vector field ${\bm u}$ in \eqref{RS} and taking the trace part of the matrix field $\nabla {\bm u}$, one can check that 
\begin{equation}
 {\rm div}\hspace{0.1em}{\bm u}=\sum_{K=1}^N \frac{\partial u_k}{\partial x_k}=\partial_{r}'u_R+\nabla_{\!\sigma}\cdot{\bm u}_S,
\end{equation}
where $\nabla_{\!\sigma}\cdot{\bm u}_S$ denotes the trace part of the matrix field $\nabla_{\!\sigma}{\bm u}_S$.
By using this identity, we further obtain  $\nabla_{\!\sigma}\cdot \nabla_{\!\sigma}f=\triangle_\sigma f$ and the spherical integration by parts formula
\begin{equation}
 \int_{\mathbb{S}^{N-1}}{\bm u}\cdot \nabla_{\!\sigma}f\,\mathrm{d}\sigma=-\int_{\mathbb{S}^{N-1}}(\nabla_{\!\sigma}\cdot {\bm u}_S)f\,\mathrm{d}\sigma\qquad\forall f\in C^\infty(\mathbb{S}^{N-1})
\end{equation}
for any fixed radius (see e.g. \cite[Lemma~2.1]{HL_pre}). 
The commutation relations between $\triangle_\sigma$ and ${\bm \sigma}$ or $\nabla_{\!\sigma}$ are given by the following identities:%
\begin{equation}
\left\{\begin{array}{l}
  \triangle_\sigma ({\bm \sigma}f)-{\bm \sigma}\triangle_\sigma f=-(N-1){\bm \sigma}f+2\nabla_{\!\sigma}f,
\vspace{0.5em} \\
 \triangle_\sigma\nabla_{\!\sigma}f-\nabla_{\!\sigma}\triangle_\sigma f=-2\hspace{0.1em}{\bm \sigma}\triangle_\sigma f+(N-3)\nabla_{\!\sigma}f.
\end{array}\right.
\label{comm}
\end{equation}
(For the details of the proof, see e.g., \cite[Lemma~3]{CF_JFA} or \cite[Lemma~7]{CF_MAAN}.)
\subsection{Poloidal-toroidal fields}
\label{subsec:PT}
We say that a vector field ${\bm u}\in C^\infty(\dot{\mathbb{R}}^N)^N$ is {\it pre-poloidal} when 
\[{\bm u}_S=\nabla_{\!\sigma}f\quad\text{on }\dot{\mathbb{R}}^N\quad \text{for some }f\in C^\infty(\dot{\mathbb{R}}^N),\]
and {\it toroidal} when $u_R={\rm div}\,{\bm u}=0$ \ on $\dot{\mathbb{R}}^N$. We denote by $\mathcal{P}(\dot{\mathbb{R}}^N)$ resp. $\mathcal{T}(\dot{\mathbb{R}}^N)$ the set  of all pre-poloidal resp. toroidal fields.  
The principal properties of them are summarized as follows:
\begin{prop}
 \label{prop:prePT}
Let ${\bm v}\in\mathcal{P}(\dot{\mathbb{R}}^N)$ and ${\bm w}\in\mathcal{T}(\dot{\mathbb{R}}^N)$. We abbreviate as ${\bm v}={\bm v}({r}{\bm \sigma})$ and ${\bm w}({r}{\bm \sigma})$ for any ${r}>0$ and ${\bm \sigma}\in\mathbb{S}^{N-1}$.  Then it holds that
\[\begin{split}
&  \int_{\mathbb{S}^{N-1}}{\bm v}\cdot {\bm w}\,\mathrm{d}\sigma=\int_{\mathbb{S}^{N-1}}\nabla {\bm v}\cdot\nabla {\bm w}\,\mathrm{d}\sigma=0\qquad(\forall{r}>0),
\\
   & \int_{\mathbb{S}^{N-1}}{\bm w}\,\mathrm{d}\sigma=0\qquad\(\forall{r}>0%
\),
   \\
   &\left\{\zeta {\bm v},\partial_{r}{\bm v},\triangle_\sigma {\bm v}\right\}\subset\mathcal{P}(\dot{\mathbb{R}}^N)\quad\text{ and }\quad
   \left\{\zeta {\bm w},\partial_{r}{\bm w},\triangle_\sigma {\bm w}\right\}\subset\mathcal{T}(\dot{\mathbb{R}}^N),
  \end{split}
\]
where $\zeta\in C^\infty(\dot{\mathbb{R}}^N)$ is any radially symmetric scalar field. 
\end{prop}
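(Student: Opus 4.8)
The plan is to reduce every assertion to the radial--spherical splitting \eqref{RS} and \eqref{grad-Lap}, the spherical integration-by-parts formula, and the commutation relations \eqref{comm}. All three claims are pointwise in the radius, so I would fix ${r}>0$ and regard ${\bm v}={\bm v}({r}{\bm \sigma})$, ${\bm w}={\bm w}({r}{\bm \sigma})$ and the attached scalars as fields on $\mathbb{S}^{N-1}$. The working characterizations are: ${\bm v}\in\mathcal{P}(\dot{\mathbb{R}}^N)$ means ${\bm v}={\bm \sigma}v_R+\nabla_{\!\sigma}f$ for some scalar $f$; ${\bm w}\in\mathcal{T}(\dot{\mathbb{R}}^N)$ means $w_R=0$ and ${\rm div}\,{\bm w}=0$, equivalently $w_R=0$ and $\nabla_{\!\sigma}\cdot{\bm w}=0$. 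I would argue in three steps: \textbf{(A)} the pointwise $L^2(\mathbb{S}^{N-1})$ orthogonality $\int_{\mathbb{S}^{N-1}}{\bm a}\cdot{\bm b}\,\mathrm{d}\sigma=0$ for any ${\bm a}\in\mathcal{P}(\dot{\mathbb{R}}^N)$ and ${\bm b}\in\mathcal{T}(\dot{\mathbb{R}}^N)$, which also yields $\int_{\mathbb{S}^{N-1}}{\bm w}\,\mathrm{d}\sigma={\bm 0}$; \textbf{(B)} closure of $\mathcal{P}(\dot{\mathbb{R}}^N)$ and $\mathcal{T}(\dot{\mathbb{R}}^N)$ under multiplication by a radial scalar, under $\partial_{r}$, and under $\triangle_\sigma$; \textbf{(C)} the gradient orthogonality, deduced from (A) and (B).

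\textbf{Step (A).} Writing ${\bm a}={\bm \sigma}v_R+\nabla_{\!\sigma}f$, one has ${\bm a}\cdot{\bm b}=v_R({\bm \sigma}\cdot{\bm b})+\nabla_{\!\sigma}f\cdot{\bm b}_S$; the first term vanishes since ${\bm \sigma}\cdot{\bm b}=0$, and integrating the second over $\mathbb{S}^{N-1}$ and using the spherical integration-by-parts formula gives $\int_{\mathbb{S}^{N-1}}{\bm a}\cdot{\bm b}\,\mathrm{d}\sigma=-\int_{\mathbb{S}^{N-1}}(\nabla_{\!\sigma}\cdot{\bm b}_S)f\,\mathrm{d}\sigma=0$. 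For the spherical mean, note that every constant vector ${\bm c}$ is pre-poloidal, because ${\bm c}={\bm \sigma}({\bm c}\cdot{\bm \sigma})+\nabla_{\!\sigma}({\bm c}\cdot{\bm \sigma})$; hence ${\bm c}\cdot\int_{\mathbb{S}^{N-1}}{\bm w}\,\mathrm{d}\sigma=\int_{\mathbb{S}^{N-1}}{\bm c}\cdot{\bm w}\,\mathrm{d}\sigma=0$ for all ${\bm c}$, i.e. $\int_{\mathbb{S}^{N-1}}{\bm w}\,\mathrm{d}\sigma={\bm 0}$.

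\textbf{Step (B).} A radial scalar $\zeta$ satisfies $\nabla_{\!\sigma}\zeta=0$ and $\nabla\zeta=(\partial_{r}\zeta){\bm \sigma}$, so $\zeta{\bm v}={\bm \sigma}(\zeta v_R)+\nabla_{\!\sigma}(\zeta f)\in\mathcal{P}(\dot{\mathbb{R}}^N)$, while $(\zeta{\bm w})_R=\zeta w_R=0$ and ${\rm div}(\zeta{\bm w})=\nabla\zeta\cdot{\bm w}+\zeta\,{\rm div}\,{\bm w}=(\partial_{r}\zeta)w_R=0$, so $\zeta{\bm w}\in\mathcal{T}(\dot{\mathbb{R}}^N)$. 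Since $\partial_{r}{\bm \sigma}={\bm 0}$ and $[\partial_k,\partial_{r}]=\frac{1}{{r}}(\partial_k-\sigma_k\partial_{r})$, one checks that $\partial_{r}$ commutes with ${\bm \sigma}\cdot$, with $\nabla_{\!\sigma}$, and with ${\rm div}$ up to harmless $\frac{1}{{r}}$-terms, whence $\partial_{r}{\bm v}={\bm \sigma}\partial_{r}v_R+\nabla_{\!\sigma}\partial_{r}f\in\mathcal{P}(\dot{\mathbb{R}}^N)$, and $(\partial_{r}{\bm w})_R=\partial_{r}w_R=0$, ${\rm div}(\partial_{r}{\bm w})=\partial_{r}({\rm div}\,{\bm w})+\frac{1}{{r}}\big({\rm div}\,{\bm w}-(\partial_{r}{\bm w})_R\big)=0$, so $\partial_{r}{\bm w}\in\mathcal{T}(\dot{\mathbb{R}}^N)$. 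For $\triangle_\sigma$ I would invoke \eqref{comm}: adding the two identities (the first applied with $f:=v_R$) gives $\triangle_\sigma{\bm v}={\bm \sigma}\big(\triangle_\sigma v_R-(N-1)v_R-2\triangle_\sigma f\big)+\nabla_{\!\sigma}\big(2v_R+\triangle_\sigma f+(N-3)f\big)\in\mathcal{P}(\dot{\mathbb{R}}^N)$. For ${\bm w}\in\mathcal{T}(\dot{\mathbb{R}}^N)$ I would instead test $\triangle_\sigma{\bm w}$ against ${\bm \sigma}h$ and against $\nabla_{\!\sigma}g$ for arbitrary scalars $h,g$ on $\mathbb{S}^{N-1}$: the componentwise $L^2(\mathbb{S}^{N-1})$ self-adjointness of $\triangle_\sigma$ (itself a consequence of the spherical integration-by-parts formula) moves $\triangle_\sigma$ onto ${\bm \sigma}h$, resp. $\nabla_{\!\sigma}g$, and \eqref{comm} then writes $\triangle_\sigma({\bm \sigma}h)$ and $\triangle_\sigma(\nabla_{\!\sigma}g)$ as sums of terms proportional to ${\bm \sigma}(\cdot)$ or to $\nabla_{\!\sigma}(\cdot)$; paired with ${\bm w}$, all such terms vanish by ${\bm \sigma}\cdot{\bm w}=0$ and $\nabla_{\!\sigma}\cdot{\bm w}=0$. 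Hence $(\triangle_\sigma{\bm w})_R=0$ and $\nabla_{\!\sigma}\cdot(\triangle_\sigma{\bm w})=0$, i.e. $\triangle_\sigma{\bm w}\in\mathcal{T}(\dot{\mathbb{R}}^N)$.

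\textbf{Step (C).} Applying \eqref{grad-Lap} componentwise, $\partial_k v_j=\sigma_k\partial_{r}v_j+\frac{1}{{r}}(\nabla_{\!\sigma}v_j)_k$ and likewise for ${\bm w}$; summing $\partial_k v_j\,\partial_k w_j$ over $k$ and using $\sum_k\sigma_k^2=1$ together with ${\bm \sigma}\cdot\nabla_{\!\sigma}(\cdot)=0$ annihilates the two cross terms, leaving $\nabla{\bm v}\cdot\nabla{\bm w}=\partial_{r}{\bm v}\cdot\partial_{r}{\bm w}+\frac{1}{{r}^2}\sum_j\nabla_{\!\sigma}v_j\cdot\nabla_{\!\sigma}w_j$. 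Integrating over $\mathbb{S}^{N-1}$: the first term integrates to $0$ by Step (A) applied to $\partial_{r}{\bm v}\in\mathcal{P}(\dot{\mathbb{R}}^N)$ and $\partial_{r}{\bm w}\in\mathcal{T}(\dot{\mathbb{R}}^N)$ from Step (B); for the second, a componentwise scalar integration by parts on $\mathbb{S}^{N-1}$ (using $\nabla_{\!\sigma}\cdot\nabla_{\!\sigma}=\triangle_\sigma$) turns $\int_{\mathbb{S}^{N-1}}\sum_j\nabla_{\!\sigma}v_j\cdot\nabla_{\!\sigma}w_j\,\mathrm{d}\sigma$ into $-\int_{\mathbb{S}^{N-1}}{\bm v}\cdot\triangle_\sigma{\bm w}\,\mathrm{d}\sigma$, which is $0$ by Step (A) applied to ${\bm v}\in\mathcal{P}(\dot{\mathbb{R}}^N)$ and $\triangle_\sigma{\bm w}\in\mathcal{T}(\dot{\mathbb{R}}^N)$ from Step (B). This gives $\int_{\mathbb{S}^{N-1}}\nabla{\bm v}\cdot\nabla{\bm w}\,\mathrm{d}\sigma=0$ and finishes the proof. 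I expect the only genuine obstacle to be Step (B) for $\triangle_\sigma$: that $\triangle_\sigma$ preserves toroidality is not visible by inspection and really rests on \eqref{comm}; everything else is bookkeeping with the splitting \eqref{RS} and \eqref{grad-Lap}.
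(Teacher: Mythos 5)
Your argument is correct. Note that the paper itself gives no proof of Proposition \ref{prop:prePT} — it defers to the references \cite{HL_pre,RL_CVPD,CF_JFA} — so there is no in-paper argument to compare against; what you have written is a valid, self-contained derivation from exactly the ingredients the paper sets up in \S2.1 (the splitting \eqref{RS}, the radial--spherical form of $\nabla$ and $\triangle$, the spherical integration-by-parts formula, and the commutation relations \eqref{comm}). The two genuinely non-obvious points are handled properly: the zero spherical mean of a toroidal field via the observation that every constant vector is pre-poloidal (since ${\bm c}={\bm \sigma}({\bm c}\cdot{\bm \sigma})+\nabla_{\!\sigma}({\bm c}\cdot{\bm \sigma})$), and the toroidality of $\triangle_\sigma{\bm w}$ via duality — testing against ${\bm \sigma}h$ and $\nabla_{\!\sigma}g$, moving $\triangle_\sigma$ across by self-adjointness, and invoking \eqref{comm} — which cleanly avoids having to compute $\triangle_\sigma{\bm w}$ explicitly. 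The gradient orthogonality then follows from the pointwise identity $\nabla{\bm v}\cdot\nabla{\bm w}=\partial_{r}{\bm v}\cdot\partial_{r}{\bm w}+\tfrac{1}{{r}^2}\sum_j\nabla_{\!\sigma}v_j\cdot\nabla_{\!\sigma}w_j$ together with the invariance statements, exactly as you say. No gaps.
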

This proposition says that the spaces $\mathcal{P}(\dot{\mathbb{R}}^N)$ and $\mathcal{T}(\dot{\mathbb{R}}^N)$ are $L^2(\mathbb{S}^{N-1})$-orthogonal and invariant under radial (derivative) operators and the Laplacians, and that every toroidal field is of zero spherical mean.

Let us further decompose the space of pre-poloidal fields.
For every $\nu\in\mathbb{N}$, we set \[\alpha_\nu:=\nu(\nu+N-2)\] and
\[\begin{aligned}
   &\mathcal{E}_\nu(\dot{\mathbb{R}}^N):=\left\{f\in C^\infty(\dot{\mathbb{R}}^N)\ ;\ -\triangle_\sigma f=\alpha_\nu f\ \text{ on }\dot{\mathbb{R}}^N\right\},
\\  &\mathcal{P}_\nu(\dot{\mathbb{R}}^N):=\left\{{\bm u}\in \mathcal{P}(\dot{\mathbb{R}}^N)\ :\ {\bm u}={\bm \sigma}f+\nabla_{\!\sigma}g\quad\text{for some } f,g\in \mathcal{E}_\nu(\dot{\mathbb{R}}^N)\right\},
  \end{aligned}
\]
which denote the set of $\nu$-th eigenfunctions of $-\triangle_\sigma$ and that of pre-poloidal fields generated by them.
Note that $\{\mathcal{P}_\nu(\dot{\mathbb{R}}^N)\}_{\nu\in{\mathbb{N}}}$ are invariant under the operation of $\zeta,\partial_{r},\triangle_\sigma$ in the sense of Proposition \ref{prop:prePT}, and that they are $L^2(\mathbb{S}^{N-1})$-orthogonal:
\begin{equation}
 \int_{\mathbb{S}^{N-1}}{\bm u}\cdot {\bm v}\;\!\mathrm{d}\sigma=\int_{\mathbb{S}^{N-1}}\nabla {\bm u}\cdot\nabla {\bm v}\;\!\mathrm{d}\sigma=0\qquad\forall ({\bm u},{\bm v})\in\mathcal{P}_\nu(\dot{\mathbb{R}}^N)\times\mathcal{P}_\rho(\dot{\mathbb{R}}^N)
\label{SH_orth}
\end{equation}
(for any radius ${r}$) holds whenever $\nu\ne\rho$. %

We say that a pre-poloidal field is {\it poloidal} whenever it is solenoidal. 
The {\it poloidal generator} is a second-order differential operator on $C^\infty(\dot{\mathbb{R}}^N)$ given by \[{\bm D}={\bm \sigma}\triangle_\sigma -{r}\partial_{r}'\nabla_{\!\sigma},\] 
which maps every scalar field $f$ to the poloidal field ${\bm D}f\in\mathcal{P}(\dot{\mathbb{R}}^N)$. 
The following fact is fundamental:
\begin{prop}
 \label{prop:PT}
 Let ${\bm u}:\mathbb{R}^N\to \mathbb{R}^N$ be a smooth solenoidal field on $\mathbb{R}^N$. Then there exists an unique pair of poloidal-toroidal fields $({\bm u}_P,{\bm u}_T)\in \mathcal{P}(\dot{\mathbb{R}}^N)\times\mathcal{T}(\dot{\mathbb{R}}^N)$ satisfying
 \[
 {\bm u}={\bm u}_P+{\bm u}_T\qquad\text{\rm on}\ \dot{\mathbb{R}}^N. \]
In particular, the poloidal part has the expression 
\[
 {\bm u}_P={\bm D}f.%
 \]
Here the scalar field $f=\triangle_\sigma^{-1}u_R$ is an unique solution to the Poisson-Beltrami equation $\triangle_\sigma f=u_R$ \ (on $\dot{\mathbb{R}}^N$) under the condition $\int_{\mathbb{S}^{N-1}}f({r}{\bm \sigma})\mathrm{d}\sigma=0$, $\forall{r}>0$.
\end{prop}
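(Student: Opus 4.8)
The plan is to construct the decomposition explicitly from the radial component $u_R={\bm \sigma}\cdot{\bm u}$, and then to deduce uniqueness by showing that a field which is at once pre-poloidal and toroidal must vanish. First I would record the one genuinely new ingredient, namely that $u_R$ has zero spherical mean on every sphere: by the divergence theorem on the ball $\{|{\bm x}|<{r}\}$ together with ${\rm div}\,{\bm u}=0$,
\[
 0=\int_{|{\bm x}|<{r}}{\rm div}\,{\bm u}\,dx=\int_{|{\bm x}|={r}}{\bm \sigma}\cdot{\bm u}\,dS={r}^{N-1}\int_{\mathbb{S}^{N-1}}u_R({r}{\bm \sigma})\,\mathrm{d}\sigma\qquad(\forall{r}>0),
\]
so $\int_{\mathbb{S}^{N-1}}u_R({r}{\bm \sigma})\,\mathrm{d}\sigma=0$ for all ${r}>0$.

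Next I would solve the Poisson--Beltrami equation $\triangle_\sigma f=u_R$. The operator $\triangle_\sigma$ on $L^2(\mathbb{S}^{N-1})$ has discrete spectrum $\{-\alpha_\nu\}_{\nu\in\mathbb{N}}$ with kernel $\mathcal{E}_0(\dot{\mathbb{R}}^N)$ equal to the constants, and is boundedly invertible on the orthogonal complement of that kernel; hence the zero-mean property above provides, for each fixed ${r}$, a unique zero-mean solution $f=\triangle_\sigma^{-1}u_R$, and boundedness of the resolvent together with the smooth ${r}$-dependence of $u_R$ and interior elliptic regularity gives $f\in C^\infty(\dot{\mathbb{R}}^N)$. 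I would then set ${\bm u}_P:={\bm D}f={\bm \sigma}\triangle_\sigma f-{r}\partial_{r}'\nabla_{\!\sigma}f={\bm \sigma}u_R-{r}\partial_{r}'\nabla_{\!\sigma}f$ and ${\bm u}_T:={\bm u}-{\bm u}_P$. Using $|{\bm \sigma}|=1$ and ${\bm \sigma}\cdot\nabla_{\!\sigma}f=0$, the radial component of ${\bm u}_P$ is $\triangle_\sigma f=u_R$ and its spherical part is $-{r}\partial_{r}'\nabla_{\!\sigma}f=\nabla_{\!\sigma}(-{r}\partial_{r}'f)$ since $\nabla_{\!\sigma}$ commutes with the radial operators, so ${\bm u}_P\in\mathcal{P}(\dot{\mathbb{R}}^N)$; moreover ${\bm D}f$ is solenoidal — hence poloidal — by the elementary identities for ${\bm D}$ recorded in \ref{subsec:PT} (cf. \cite{HL_pre,RL_CVPD,CF_JFA}). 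It then follows at once that ${\bm u}_T$ has radial component $u_R-u_R=0$ and ${\rm div}\,{\bm u}_T={\rm div}\,{\bm u}-{\rm div}\,{\bm u}_P=0$, so ${\bm u}_T\in\mathcal{T}(\dot{\mathbb{R}}^N)$. This proves existence together with the stated formula ${\bm u}_P={\bm D}f$, $f=\triangle_\sigma^{-1}u_R$.

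For uniqueness, since poloidal and toroidal fields each form a linear space, it is enough to show that ${\bm v}\in\mathcal{P}(\dot{\mathbb{R}}^N)\cap\mathcal{T}(\dot{\mathbb{R}}^N)$ vanishes: being toroidal gives $v_R=0$ and ${\rm div}\,{\bm v}=0$, and being pre-poloidal gives ${\bm v}_S=\nabla_{\!\sigma}h$ with $h\in C^\infty(\dot{\mathbb{R}}^N)$; then $0={\rm div}\,{\bm v}=\partial_{r}'v_R+\nabla_{\!\sigma}\cdot\nabla_{\!\sigma}h=\triangle_\sigma h$ on every sphere, whence $h$ is constant in ${\bm \sigma}$ and ${\bm v}_S=\nabla_{\!\sigma}h={\bm 0}$, so ${\bm v}={\bm 0}$. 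Applying this to the difference of two candidate decompositions of ${\bm u}$ yields uniqueness.

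I expect the main obstacle to be the middle step — the solvability and regularity of the Poisson--Beltrami equation, i.e. the elliptic theory of $\triangle_\sigma$ on the compact manifold $\mathbb{S}^{N-1}$ together with smooth dependence on the radial parameter ${r}$ — while the rest is bookkeeping with the radial--spherical calculus of \ref{subsec:notation}--\ref{subsec:PT}, and the construction as a whole rests on the zero-mean identity of the first step. Since the statement is classical, going back to Backus \cite{Backus} for $N=3$ and Weck \cite{Weck} in general, in the paper it may reasonably be cited rather than proved in full.
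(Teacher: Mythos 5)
Your proposal is correct: the zero-mean identity for $u_R$ via the divergence theorem, the solution of $\triangle_\sigma f=u_R$ on each sphere by the standard spectral/elliptic theory of the Laplace--Beltrami operator, the verification that ${\bm D}f$ is poloidal and that the remainder is toroidal, and the uniqueness argument via $\mathcal{P}\cap\mathcal{T}=\{{\bm 0}\}$ together constitute exactly the standard construction. The paper itself does not prove Proposition~\ref{prop:PT} but delegates it to \cite{Backus,Weck,HL_pre,RL_CVPD,CF_JFA}, and your argument is a faithful, self-contained version of what those references do.
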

In this proposition, we call $f=\triangle_\sigma^{-1} u_R$ the {\it poloidal potential} of ${\bm u}$. %
When it %
is multiplied by a radially symmetric scalar field $\zeta$,  the deformation of ${\bm D}f$ and $\nabla {\bm D}f$ has the following $L^2(\mathbb{S}^{N-1})$ estimates \cite[Lemma~2.3]{RL_CVPD}:
\begin{equation}
 \left.\begin{aligned}  &C\int_{\mathbb{S}^{N-1}}|{\bm D}(\zeta f)-\zeta {\bm D}f|^2\mathrm{d}\sigma  \le (\partial\zeta)^2\int_{\mathbb{S}^{N-1}} |{\bm D}f|^2\mathrm{d}\sigma, \\& C \int_{\mathbb{S}^{N-1}}|\nabla {\bm D}{(\zeta f)}-\zeta\nabla {\bm D}f|^2\mathrm{d}\sigma\le\left( (\partial\zeta)^2+(\partial^2\zeta)^2\right) \int_{\mathbb{S}^{N-1}}\frac{|{\bm D}f|^2}{{r^2}}\mathrm{d}\sigma%
 \end{aligned}
\right\}
\label{Df}
\end{equation}
for any ${r}>0$, where we abbreviate as 
\begin{equation}
\red{\partial={r}\partial_{r}}
\label{abb}
\end{equation} 
and where $C$ is a constant number depending only on $N$. 
In particular, this fact yields that every solenoidal field ${\bm u}$ on $\mathbb{R}^N$ can be $L^2$ approximated by those with compact support on $\dot{\mathbb{R}}^N$, %
which we will use in the following form:
\begin{lemma}
\label{lemma:density}
Let %
${\bm u}\in \mathcal{D}(\mathbb{R}^N)$ be a solenoidal field, and let $\zeta_0\in C_c^\infty(\mathbb{R})$ with $\zeta_0(0)=1$. Define $\{{\bm u}_n\}_{n\in\mathbb{N}}\subset C_c^\infty(\dot{\mathbb{R}}^N)^N$ by%
 \[{\bm u}_n={\bm D} (\zeta_n f)+\zeta_n {\bm u}_T\] 
for the poloidal potential $f$ of ${\bm u}_P$, where we set $\zeta_n({\bm x})=\zeta_0\(\frac{1}{n}\log|{\bm x}|\)$ \ $(\forall n\in\mathbb{N})$. 
Then it holds that %
\[
\|{\bm u}_n-{\bm u}\|_{\mathcal{D}}\to 0%
 \qquad{\rm as }\ n\to\infty.
\]
\end{lemma}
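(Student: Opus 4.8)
The plan is to prove Lemma \ref{lemma:density} by a standard cut-off argument, controlling the error $\|{\bm u}_n-{\bm u}\|_{\mathcal D}$ term by term in the three integrals defining $\|\cdot\|_{\mathcal D}$, and exploiting the logarithmic scaling of $\zeta_n$ to make the "bad" commutator terms vanish as $n\to\infty$. Observe first that $\zeta_n({\bm x})=\zeta_0(\tfrac1n\log|{\bm x}|)$ is radially symmetric, equals $1$ near $|{\bm x}|=1$, and is supported in an annulus $e^{-Cn}\le|{\bm x}|\le e^{Cn}$, so each ${\bm u}_n$ indeed has compact support in $\dot{\mathbb R}^N$ and is smooth there; smoothness across that annulus plus the decay of $\zeta_0$ shows ${\bm u}_n\in C_c^\infty(\dot{\mathbb R}^N)^N$. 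Writing $\partial={r}\partial_{r}$ as in \eqref{abb}, the chain rule gives $\partial\zeta_n({\bm x})=\tfrac1n\zeta_0'(\tfrac1n\log|{\bm x}|)$ and $\partial^2\zeta_n=\tfrac1{n^2}\zeta_0''+\tfrac1n\zeta_0'$, hence
\begin{equation}
\|\partial\zeta_n\|_{L^\infty}=O(1/n),\qquad \|\partial^2\zeta_n\|_{L^\infty}=O(1/n)
\label{zeta_est}
\end{equation}
as $n\to\infty$, while $\zeta_n\to1$ pointwise and boundedly on $\dot{\mathbb R}^N$.

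Next I would split ${\bm u}_n-{\bm u}=\big({\bm D}(\zeta_n f)-\zeta_n{\bm D}f\big)+\big(\zeta_n-1\big){\bm u}_P+\big(\zeta_n-1\big){\bm u}_T$, using ${\bm u}_P={\bm D}f$ from Proposition \ref{prop:PT} and ${\bm u}={\bm u}_P+{\bm u}_T$. For the second and third groups, since $\zeta_n\to1$ boundedly and $|{\bm u}_P|^2,|{\bm u}_P|^2|{\bm x}|^2,|\nabla{\bm u}_P|^2$ (and likewise for ${\bm u}_T$) are integrable by hypothesis ${\bm u}\in\mathcal D(\mathbb R^N)$ together with the $L^2(\mathbb S^{N-1})$-orthogonality in Proposition \ref{prop:prePT} (which guarantees $\|{\bm u}_P\|_{\mathcal D},\|{\bm u}_T\|_{\mathcal D}\le\|{\bm u}\|_{\mathcal D}$), dominated convergence gives that these contributions tend to $0$; for the gradient part one also needs $\nabla\big((\zeta_n-1){\bm u}_P\big)=(\zeta_n-1)\nabla{\bm u}_P+\tfrac1{{r}}(\partial\zeta_n){\bm \sigma}\otimes{\bm u}_P$, whose second term is $O(1/n)\cdot\int|{\bm u}_P|^2/{r}^2\,dx$ — this requires a weighted Hardy-type bound on ${\bm u}_P$, which follows because $\int|{\bm D}f|^2/{r}^2\,dx$ is finite whenever $\int|\nabla{\bm D}f|^2\,dx<\infty$ (a one-dimensional Hardy inequality in the radial variable applied spherical-harmonic-mode by mode, using that ${\bm D}f$ has no $\nu=0$ component). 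The same remark handles the ${\bm u}_T$ gradient term.

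The core of the argument is the commutator term ${\bm D}(\zeta_n f)-\zeta_n{\bm D}f$ and its gradient. Here I would invoke the estimates \eqref{Df} from \cite[Lemma~2.3]{RL_CVPD} directly: integrating the first line of \eqref{Df} in ${r}$ against $(1+{r}^2)\,{r}^{N-1}d{r}$ and the second line against ${r}^{N-1}d{r}$, and using \eqref{zeta_est}, we obtain
\begin{equation}
C\int_{\mathbb R^N}\big|{\bm D}(\zeta_n f)-\zeta_n{\bm D}f\big|^2(1+|{\bm x}|^2)\,dx\le \|\partial\zeta_n\|_{L^\infty}^2\int_{\mathbb R^N}|{\bm D}f|^2(1+|{\bm x}|^2)\,dx=O(1/n^2),
\label{comm_est1}
\end{equation}
and similarly $C\int_{\mathbb R^N}|\nabla{\bm D}(\zeta_n f)-\zeta_n\nabla{\bm D}f|^2\,dx\le(\|\partial\zeta_n\|_{L^\infty}^2+\|\partial^2\zeta_n\|_{L^\infty}^2)\int_{\mathbb R^N}\tfrac{|{\bm D}f|^2}{|{\bm x}|^2}\,dx=O(1/n^2)$; the right-hand integrals are finite by the Hardy bound mentioned above and the finiteness of $\|{\bm u}_P\|_{\mathcal D}$. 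Combining \eqref{comm_est1} with the dominated-convergence estimates for the other two groups yields $\|{\bm u}_n-{\bm u}\|_{\mathcal D}\to0$. The main obstacle I anticipate is the bookkeeping around the radial-weighted Hardy inequality needed to justify that $\int|{\bm D}f|^2/|{\bm x}|^2\,dx<\infty$ and $\int|{\bm u}_T|^2/|{\bm x}|^2\,dx<\infty$ under only the hypothesis ${\bm u}\in\mathcal D(\mathbb R^N)$; this rests on the absence of a zero spherical-harmonic mode in poloidal and toroidal fields (Proposition \ref{prop:prePT}), which makes the relevant radial Hardy constants strictly positive, but it must be set up carefully mode by mode — possibly by first reducing to ${\bm u}\in C_c^\infty(\mathbb R^N)^N$ where everything is elementary and then passing to the $\mathcal D$-limit.
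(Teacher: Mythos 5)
Your proposal is correct and follows essentially the same route as the paper: a logarithmically scaled radial cut-off, the commutator estimates \eqref{Df} for the poloidal part, dominated convergence for the remaining terms, and control of $\int_{\mathbb{R}^N}|{\bm u}|^2/|{\bm x}|^2\,dx$. The only point where you over-complicate matters is the Hardy bound: since $N\ge3$, the standard Hardy inequality applied to ${\bm u}\in\mathcal{D}(\mathbb{R}^N)$ together with the $L^2(\mathbb{S}^{N-1})$-orthogonality of Proposition~\ref{prop:prePT} already gives $\int_{\mathbb{R}^N}|{\bm u}_P|^2/|{\bm x}|^2\,dx+\int_{\mathbb{R}^N}|{\bm u}_T|^2/|{\bm x}|^2\,dx<\infty$, so no mode-by-mode argument or appeal to the absence of a $\nu=0$ component is needed (and note the harmless slip that $(r\partial_r)^2\zeta_n=\tfrac{1}{n^2}\zeta_0''$, which is even better than your $O(1/n)$ bound).
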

\begin{proof}
 It suffices to separately check the cases ${\bm u}={\bm u}_T$ and ${\bm u}={\bm u}_P$.

Let us consider the case ${\bm u}={\bm u}_T$. Since ${\bm u}_n=\zeta_n {\bm u}$, it is clear from the dominated convergence theorem that $\int_{\mathbb{R}^N}|{\bm u}_n-{\bm u}|^2(1+|{\bm x}|^2)dx\to 0$. Moreover, a direct calculation yields
\[
\begin{aligned}
 \int_{\mathbb{R}^N}&|\nabla {\bm u}_n-\zeta_n \nabla {\bm u}|^2dx
=\int_{\mathbb{R}^N}(\partial_{r}\zeta_n)^2\left|{\bm u}\right|^2dx
\\&=\int_{\mathbb{R}^N}(\partial\zeta_n)^2 \frac{|{\bm u}|^2}{|{\bm x}|^2}dx=O(n^{-2})\int_{\mathbb{R}^N}\frac{|{\bm u}|^2}{|{\bm x}|^2}dx.
\end{aligned}
\]
Notice here that the last integral is finite,  since ${\bm u}\in\mathcal{D}(\mathbb{R}^N)$ and $N\ge3$. Therefore, we get
$ \int_{\mathbb{R}^N}|\nabla {\bm u}_n-\zeta_n\nabla {\bm u}|^2dx\to0$.  
Since $\int_{\mathbb{R}^N}|\zeta_n\nabla {\bm u}-\nabla{\bm u}|^2dx\to0$ also holds from the dominated convergence theorem, by way of the $L^2$ triangle inequality we get $\int_{\mathbb{R}^N}|\nabla{\bm u}_n-\nabla {\bm u}|^2dx\to 0$ and hence $\|{\bm u}_n-{\bm u}\|_{\mathcal{D}}\to0$, as desired.

For the case ${\bm u}={\bm u}_P$, it can be written as ${\bm u}={\bm D}f$. 
By using \eqref{Df}, we have
\[
\begin{aligned}
 \int_{\mathbb{R}^N}&\left|{\bm u}_n -\zeta_n {\bm u} \right|^2(1+|{\bm x}|^2)dx
\\&=\int_{\mathbb{R}^N}\left|{\bm D}(\zeta_n f)-\zeta_n {\bm D}f\right|^2(1+|{\bm x}|^2)dx
\\&\le \frac{1}{C}\int_{\mathbb{R}^N}(\partial \zeta_n )^2|{\bm D}f|^2(1+|{\bm x}|^2)dx=O(n^{-2})\int_{\mathbb{R}^N}|{\bm u}|^2(1+|{\bm x}|^2)dx,
\\
 \int_{\mathbb{R}^N}&\left|\nabla{\bm u}_n -\zeta_n\nabla{\bm u} \right|^2dx
 =\int_{\mathbb{R}^N}\left|\nabla {\bm D}(\zeta_n f)-\zeta_n\nabla {\bm D}f\right|^2dx
\\&\le \frac{1}{C} \int_{\mathbb{R}^N}\((\partial\zeta_n)^2+(\partial^2\zeta_n)^2\)\frac{|{\bm D}f|^2}{|{\bm x}|^2}dx
=O(n^{-2})\int_{\mathbb{R}^N}\frac{|{\bm u} |^2}{|{\bm x}|^2}dx.
\end{aligned}
\]
Notice that both the last integrals are finite, for the same reason as the previous case. %
Therefore, since $\zeta_n {\bm u}\to {\bm u}$ and $\zeta_n\nabla {\bm u}\to \nabla{\bm u}$ are dominated convergence, we get $\|{\bm u}_n-{\bm u}\|_{\mathcal{D}}\to0$ by way of the triangle inequality. 
\end{proof}
\subsection{PT decomposition of the best HUP constant%
}
Let ${\bm u}={\bm u}_P+{\bm u}_T$ be the PT decomposition of a solenoidal field ${\bm u}\in\mathcal{D}(\mathbb{R}^N)$. Then we see from the $L^2(\mathbb{S}^{N-1})$ orthogonality of PT fields  (in the sense of Proposition \ref{prop:prePT})  that 
\begin{equation}
\begin{aligned}
&\int_{\mathbb{R}^N}|\nabla {\bm u}|^2dx\int_{\mathbb{R}^N}|{\bm u}|^2|{\bm x}|^2dx
\\ &=\(\int_{\mathbb{R}^N}|\nabla {\bm u}_P|^2dx+\int_{\mathbb{R}^N}|\nabla {\bm u}_T|^2dx\)
  \(\int_{\mathbb{R}^N}|{\bm u}_P|^2|{\bm x}|^2dx+\int_{\mathbb{R}^N}|{\bm u}_T|^2|{\bm x}|^2dx\)
 \\ &\ge\( \sqrt{\textstyle\int_{\mathbb{R}^N}|\nabla {\bm u}_P|^2dx} \sqrt{\textstyle\int_{\mathbb{R}^N}|{\bm u}_P|^2|{\bm x}|^2dx}+\sqrt{\textstyle\int_{\mathbb{R}^N}|\nabla {\bm u}_T|^2|{\bm x}|^2dx}\sqrt{\textstyle\int_{\mathbb{R}^N}|{\bm u}_T|^2|{\bm x}|^2dx}\)^2
 \\&\ge \(\sqrt{C_{P,N}}\int_{\mathbb{R}^N}|{\bm u}_P|^2dx+\sqrt{C_{T,N}}\int_{\mathbb{R}^N}|{\bm u}_T|^2dx\)^2
 \\&\ge\(\min\left\{\sqrt{C_{P,N}},\sqrt{C_{T,N}}\right\}\(\int_{\mathbb{R}^N}|{\bm u}_P|^2dx+\int_{\mathbb{R}^N}|{\bm u}_T|^2dx\)\)^2
 \\&=\min\left\{C_{P,N},C_{T,N}\right\}\(\int_{\mathbb{R}^N}|{\bm u}|^2dx\)^2,
\end{aligned}
\end{equation}
where the inequality in the third line follows by applying the Cauchy-Schwarz inequality of the form $(a^2+b^2)(c^2+d^2)\ge (ac+bd)^2$, and where the notation 
 \[\begin{split}
&   C_{P,N}:=\inf_{{\bm u}\in\mathcal{P}}\frac{\int_{\mathbb{R}^N}|\nabla {\bm u}|^2dx\int_{\mathbb{R}^N}|{\bm u}|^2|{\bm x}|^2dx}{\int_{\mathbb{R}^N}|{\bm u}|^2dx}
\\
\text{resp}.\quad & C_{T,N}:=\inf_{{\bm u}\in\mathcal{T}}\frac{\int_{\mathbb{R}^N}|\nabla {\bm u}|^2dx\int_{\mathbb{R}^N}|{\bm u}|^2|{\bm x}|^2dx}{\int_{\mathbb{R}^N}|{\bm u}|^2dx}
\end{split}
\]  
denotes the best HUP constant for poloidal resp. toroidal fields in $\mathcal{D}(\mathbb{R}^N)\setminus\{{\bm 0}\}$ which we abbreviate as ${\bm u}\in \mathcal{P}$ resp. ${\bm u}\in\mathcal{T}$ under the infimum sign.  
  Then we find that the best HUP constant  for solenoidal fields satisfies
\begin{equation}
C_{N}= \inf_{\substack{
{\rm div}{\bm u}=0}}\frac{\int_{\mathbb{R}^N}|\nabla {\bm u}|^2dx\int_{\mathbb{R}^N}|{\bm u}|^2|{\bm x}|^2dx}{\int_{\mathbb{R}^N}|{\bm u}|^2dx}
\ge\min\left\{C_{P,N},C_{T,N}\right\}.
\end{equation}
Combining this inequality with the reverse $C_{N}\le\min\left\{C_{P,N},C_{T,N}\right\}$, we get
\begin{equation}
 C_{N}=\min\left\{C_{P,N},C_{T,N}\right\}.
\label{C_N}
\end{equation}
In this way,  the problem of finding the value of $C_{N}$ is separated into that of $C_{P,N}$ and $C_{T,N}$. %

\if0
We further assume that the solenoidal field ${\bm u}$ is compactly supported on $\dot{\mathbb{R}}^N$. This is possible; indeed, by making use of the integrability condition on ${\bm u}$ (as mentioned in the beginning of this section), 
we can construct a sequence of solenoidal fields $\{{\bm u}_n\}_{n\in\mathbb{N}}\subset C_c^\infty(\dot{\mathbb{R}}^N)$ satisfying the approximation property:
\[
\left.\begin{split}
 & \int_{\mathbb{R}^N}|{\bm u}_n|^2|{\bm x}|^{2\gamma-4}dx\to \int_{\mathbb{R}^N}|{\bm u}|^2|{\bm x}|^{2\gamma-4}dx,
\\&
 \int_{\mathbb{R}^N}|\nabla {\bm u}_n|^2|{\bm x}|^{2\gamma-2}dx\to \int_{\mathbb{R}^N}|\nabla {\bm u}|^2|{\bm x}|^{2\gamma-2}dx,
 \\& \int_{\mathbb{R}^N}|\triangle{\bm u}_n|^2|{\bm x}|^{2\gamma}dx\to \int_{\mathbb{R}^N}|\triangle{\bm u}|^2|{\bm x}|^{2\gamma}dx
\end{split}
\right\}\quad \text{as } n\to\infty.
\]
To show that, let $f=\triangle_\sigma^{-1}u_R$ be the poloidal potential of ${\bm u}$, from which we have
\[{\bm u}={\bm u}_P+{\bm u}_T,\quad {\bm u}_P={\bm D}f\]
as the PT decomposition of ${\bm u}$ in terms of the poloidal generator.  
Define $\{{\bm u}_n\}_{n\in\mathbb{N}}%
$ as a sequence of solenoidal fields by the formula
\[
{\bm u}_n={\bm D}\big(\zeta_nf\big)+\zeta_n{\bm u}_T\qquad(\forall n\in\mathbb{N}).
\]
Here  $\{\zeta_n\}_{n\in\mathbb{N}}\subset C_c^\infty(\dot{\mathbb{R}}^N)$ are radially symmetric scalar fields given by
\[
\begin{split}
 \hspace{5em}&  \zeta_n({\bm x})=\zeta_0\(\mfrac{1}{n}\log|{\bm x}|\)\qquad\forall n\in\mathbb{N}
\end{split}
\]
together with a fixed function $\zeta_0\in C_c^\infty(\mathbb{R})$ such that $\zeta_0(0)=1$, whence in particular we have $\left\{{\bm u}_n\right\}_{n\in\mathbb{N}}\subset C_c^\infty(\dot{\mathbb{R}}^N)^N$. %
In this setting, apply Lemma~\ref{lemma:deform_pol} to the case $\zeta=\zeta_n$; then we have
\[
\begin{split}
& C \int_{\mathbb{R}^N}|{\bm u}_n-\zeta_n {\bm u}|^2|{\bm x}|^{2\gamma-4}dx\le\int_{\mathbb{R}^N} (\partial\zeta_n)^2|{\bm u}|^2|{\bm x}|^{2\gamma-4}dx,
\\&
 C \int_{\mathbb{R}^N}|\nabla {\bm u}_n-\zeta_n\nabla {\bm u}|^2|{\bm x}|^{2\gamma-2}dx
 \le\left( (\partial\zeta_n)^2+(\partial^2\zeta_n)^2\right) \int_{\mathbb{R}^N}|{\bm u}|^2|{\bm x}|^{2\gamma-4}dx,
 \\
&\begin{split}
  C\int_{\mathbb{R}^N}|\triangle {\bm u}_n-\zeta_n\triangle {\bm u}|^2|{\bm x}|^{2\gamma}dx\le&
  \int_{\mathbb{R}^N}(\partial\zeta_n)^2|\nabla{\bm u}|^2|{\bm x}|^{2\gamma-2}dx
 \\&+\mathop{\int}_{\mathbb{R}^N}\Big((\partial\zeta_n)^2+(\partial^2\zeta_n)^2+(\partial^3\zeta_n)^2\Big)|{\bm u}|^2|{\bm x}|^{2\gamma-4}dx\,%
\end{split}
\end{split}
\]
with the aid of the calculation by using Propositions \ref{prop:prePT} and \ref{prop:PT}, where the constant $C>0$ depends only on $N$. 
Notice on the right-hand sides that the radial factors have the estimates
\[
\begin{split}
 & \left|\partial\zeta_n({\bm x})\right|=\left|{r}\frac{d}{d{r}}\zeta_0\((1/n)\log{r}\)\right|=\frac{1}{n}\big|\zeta_0'((1/n)\log{r})\big|\le \frac{C}{n},
 \\
 & \left|\partial^2
 \zeta_n({\bm x})\right|=\frac{1}{n^2}\big|\zeta_0''\((1/n)\log{r}\)\big|\le \frac{C}{n^2},
\\& \left|\partial^3
 \zeta_n({\bm x})\right|=\frac{1}{n^3}\big|\zeta_0'''\((1/n)\log{r}\)\big|\le \frac{C}{n^3}
\end{split}
\]
for some constant $C>0$ depending only on $\zeta_0$, whence we have
\[
\left.
\begin{split}
&  \int_{\mathbb{R}^N}|{\bm u}_n-\zeta_n {\bm u}|^2|{\bm x}|^{2\gamma-4}dx\to 0,
\\&
 \int_{\mathbb{R}^N}|\nabla {\bm u}_n-\zeta_n\nabla {\bm u}|^2|{\bm x}|^{2\gamma-2}dx\to0,
\\
& \int_{\mathbb{R}^N}|\triangle {\bm u}_n-\zeta_n \triangle {\bm u}|^2|{\bm x}|^{2\gamma}dx\to 0 \ \
\end{split}
\right\}\quad\text{ as } \ n\to\infty
\]
by using the integrability condition on ${\bm u}$. Combine this result together with the following fact: since $\zeta_0(0)=1$, the $L^2$-dominated convergences
\[\zeta_n {\bm u}\to{\bm u},\quad\zeta_n\nabla {\bm u}\to\nabla {\bm u}\quad\text{ and }\quad \zeta_n\triangle{\bm u}\to\triangle{\bm u}\]
hold on $\mathbb{R}^N$ with respect to (in this order) the measures $|{\bm x}|^{2\gamma-4}dx$, $|{\bm x}|^{2\gamma-2}dx$ and $|{\bm x}|^{2\gamma}dx$. Then 
 we obtain
\[
\left.\begin{split}
 &\int_{\mathbb{R}^N} |{\bm u}_n-{\bm u}|^2|{\bm x}|^{2\gamma-4}dx
 \to 0,
\\&
       \int_{\mathbb{R}^N}|\nabla {\bm u}_n-\nabla {\bm u}|^2|{\bm x}|^{2\gamma-2}dx\to0,
\\
 &\int_{\mathbb{R}^N}|\triangle {\bm u}_n-\triangle{\bm u}|^2|{\bm x}|^{2\gamma}dx \to0\quad
\end{split}
\right\}(n\to\infty)\]
through the $L^2$-triangle inequalities, and therefore we arrive at the desired approximation. Consequently, the evaluation of $C_{N,\gamma}$ for solenoidal fields is reduced to that of  $C_{P,N,\gamma}$ and $C_{T,N,\gamma}$ for compactly supported PT fields on $\dot{\mathbb{R}}^N$.
\fi
\section{The case \mbox{\boldmath $u=u_T$}: %
evaluation of $C_{T,N,\gamma}$
}\label{sec:tor}
In this section, we always assume that  ${\bm u}=(u_1,\cdots,u_N)\in\mathcal{D}(\mathbb{R}^N)$ is toroidal. As mentioned in Proposition \ref{prop:prePT}, this field is of zero spherical mean:
\[
 \int_{\mathbb{S}^{N-1}}u_k({r}{\bm \sigma})\mathrm{d}\sigma=0\quad\ \forall{r}>0,\quad \forall k\in\{1,\cdots,N\}.
\]
Then an application of Cauchy-Schwarz inequality yields
\begin{align}
\int_{\mathbb{R}^N}&|\nabla {\bm u}|^2dx \int_{\mathbb{R}^N}|{\bm u}|^2|{\bm x}|^2dx
 \\&=\(\sum_{k=1}^N \int_{\mathbb{R}^N}|\nabla u_k|^2dx\) \(\sum_{k=1}^N \int_{\mathbb{R}^N}(u_k)^2|{\bm x}|^2dx\)
 \\&\ge\(\sum_{k=1}^N\sqrt{\int_{\mathbb{R}^N}|\nabla u_k|^2dx}\sqrt{\int_{\mathbb{R}^N}(u_k)^2|{\bm x}|^2dx}\)^2
 \\&\ge \(\sum_{k=1}^N \mfrac{N+2}{2}\int_{\mathbb{R}^N}(u_k)^2dx\)^2
 =\(\mfrac{N+2}{2}\)^2\(\int_{\mathbb{R}^N}|{\bm u}|^2dx\)^2,\qquad 
\label{HUP_tor}
\end{align}
where the second inequality holds by using the following fact:
\begin{lemma}
\label{lemma:HUP_1}
 Let $f\in C^\infty(\mathbb{R}^N)$ be a scalar field with $\|f\|_{\mathcal{D}}<\infty$ and \[\int_{\mathbb{S}^{N-1}}f({r}{\bm \sigma})\mathrm{d}\sigma=0\quad\text{ for all }{r}>0.\] 
We assume that $\int_{\mathbb{S}^{N-1}}\(f(R {\bm \sigma})\)^2\mathrm{d}\sigma=o(R^{-N})$ as $R\to\infty$. Then it holds that
\[
 \int_{\mathbb{R}^N}|\nabla f|^2dx
 \int_{\mathbb{R}^N}f^2|{\bm x}|^2dx\ge\(\mfrac{N+2}{2}\)^2\(\int_{\mathbb{R}^N}f^2dx\)^2.
\]
Here the equality holds when $f$ is of the form $f({\bm x})=\sum_{k=1}^N B_kx_k e^{-B_0|{\bm x}|^2}$, with $B_0>0$ and  $\{B_1\cdots,B_N\}\subset\mathbb{R}$ being constant numbers. 
\end{lemma}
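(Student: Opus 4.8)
The plan is to derive the improved constant $\big(\tfrac{N+2}{2}\big)^2$ from the unconstrained Heisenberg constant by a dimension-shift trick: the change of unknown $f=|{\bm x}|g$ turns the zero-spherical-mean hypothesis into a gain of exactly two ``extra dimensions''. First I would combine the radial decomposition $|\nabla f|^2=(\partial_{r}f)^2+|{\bm x}|^{-2}|\nabla_{\!\sigma}f|^2$ coming from \eqref{grad-Lap} with the spherical Poincar\'e inequality $\int_{\mathbb{S}^{N-1}}|\nabla_{\!\sigma}\phi|^2\mathrm{d}\sigma\ge(N-1)\int_{\mathbb{S}^{N-1}}\phi^2\mathrm{d}\sigma$, valid for $\phi\in C^\infty(\mathbb{S}^{N-1})$ of zero mean because the lowest nonzero eigenvalue of $-\triangle_\sigma$ equals $\alpha_1=N-1$. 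Applied to $\phi=f(r{\bm\sigma})$ for each fixed $r>0$ and integrated against $r^{N-1}\mathrm{d}r$, this yields
\[
\int_{\mathbb{R}^N}|\nabla f|^2dx\ \ge\ \int_{\mathbb{R}^N}\Big((\partial_{r}f)^2+(N-1)\tfrac{f^2}{|{\bm x}|^2}\Big)dx.
\]

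Next, setting $g:=f/|{\bm x}|$ so that $\partial_{r}f=g+|{\bm x}|\,\partial_{r}g$, I would expand the right-hand side as $Ng^2+|{\bm x}|\,\partial_{r}(g^2)+|{\bm x}|^2(\partial_{r}g)^2$ and use the skew-adjointness $\int_{\mathbb{R}^N}\psi\,\partial_{r}\chi\,dx=-\int_{\mathbb{R}^N}(\partial_{r}'\psi)\chi\,dx$ with $\partial_{r}'=\partial_{r}+\tfrac{N-1}{|{\bm x}|}$ (hence $\partial_{r}'|{\bm x}|=N$) to absorb the cross term, getting
\[
\int_{\mathbb{R}^N}\Big((\partial_{r}f)^2+(N-1)\tfrac{f^2}{|{\bm x}|^2}\Big)dx=\int_{\mathbb{R}^N}|{\bm x}|^2(\partial_{r}g)^2dx.
\]
Since $|{\bm x}|^2g^2=f^2$ and $|{\bm x}|^4g^2=|{\bm x}|^2f^2$, a Cauchy--Schwarz inequality followed by one more integration by parts ($\partial_{r}'|{\bm x}|^3=(N+2)|{\bm x}|^2$) then gives
\[
\int_{\mathbb{R}^N}|{\bm x}|^2(\partial_{r}g)^2dx\,\int_{\mathbb{R}^N}|{\bm x}|^2f^2dx\ \ge\ \Big(\int_{\mathbb{R}^N}|{\bm x}|^3g\,\partial_{r}g\,dx\Big)^2=\Big(\tfrac{N+2}{2}\int_{\mathbb{R}^N}f^2dx\Big)^2,
\]
because $\int_{\mathbb{R}^N}|{\bm x}|^3g\,\partial_{r}g\,dx=\tfrac12\int_{\mathbb{R}^N}|{\bm x}|^3\partial_{r}(g^2)dx=-\tfrac{N+2}{2}\int_{\mathbb{R}^N}f^2dx$; combining the three displays proves the inequality. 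For the equality clause it suffices to substitute $f=\big(\sum_k B_kx_k\big)e^{-B_0|{\bm x}|^2}$: then $g=\big(\sum_k B_k\sigma_k\big)e^{-B_0|{\bm x}|^2}$ is a degree-one spherical harmonic times a radial Gaussian, so the Poincar\'e step becomes an identity (an eigenfunction of $-\triangle_\sigma$), while $\partial_{r}g=-2B_0|{\bm x}|\,g$ makes $|{\bm x}|\partial_{r}g$ proportional to $|{\bm x}|^2g$, so the Cauchy--Schwarz step is sharp.

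The computations above are elementary; the only delicate point is the legitimacy of the two integrations by parts under the stated hypotheses. This is exactly what the decay assumption $\int_{\mathbb{S}^{N-1}}\big(f(R{\bm\sigma})\big)^2\mathrm{d}\sigma=o(R^{-N})$ is tailored for: it forces the boundary terms $|{\bm x}|^{N}g^2$ and $|{\bm x}|^{N+2}g^2=|{\bm x}|^{N}f^2$ to vanish as $|{\bm x}|\to\infty$. Near the origin there is nothing to check, since $f\in C^\infty(\mathbb{R}^N)$ with zero spherical mean forces $f({\bm 0})=0$, hence $f=O(|{\bm x}|)$ and $g$ stays bounded; in particular $\int_{\mathbb{R}^N}f^2/|{\bm x}|^2dx<\infty$ for $N\ge3$, which is also what makes the middle display finite. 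If one prefers to avoid boundary bookkeeping, one can first establish the inequality for $f\in C_c^\infty(\dot{\mathbb{R}}^N)$ of zero spherical mean --- there $g\in C_c^\infty(\dot{\mathbb{R}}^N)$ and every step is unconditional --- and then pass to a general admissible $f$ by a radial cut-off near ${\bm 0}$ and near $\infty$, the truncation errors being controlled by the finiteness of $\int f^2/|{\bm x}|^2dx$ and $\int f^2dx$, respectively, while the cut-off manifestly preserves the zero-mean property. The ``hard part'' is therefore only this bookkeeping; the heart of the argument is one Cauchy--Schwarz after the dimension shift.
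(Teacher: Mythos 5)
Your argument is correct, and it reaches the sharp constant by a genuinely different (and shorter) route than the paper. The paper expands $f=\sum_{\nu\ge1}f_\nu$ into \emph{all} spherical harmonic modes, applies the mode-dependent substitution $f_\nu=r^{\nu}\cdot(\,\cdot\,)$ to convert $\int|\nabla f_\nu|^2dx$ into a weighted radial Dirichlet integral, runs a Cauchy--Schwarz over the sum of modes, and only at the end minimizes the resulting constants $\frac{2\nu+N}{2}$ over $\nu\ge1$. You instead compress the entire use of the zero-mean hypothesis into one spherical Poincar\'e inequality with the first nonzero eigenvalue $\alpha_1=N-1$, and then apply the $\nu=1$ substitution $f=|{\bm x}|g$ uniformly to the whole of $f$; for higher modes your first inequality is simply strict rather than sharp, which costs nothing since the infimum is attained at $\nu=1$. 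Your bookkeeping of the boundary terms is also consistent with the paper's: the terms $|{\bm x}|^{N-2}f^2$ and $|{\bm x}|^{N}f^2$ at infinity are exactly what the hypothesis $\int_{\mathbb{S}^{N-1}}(f(R{\bm\sigma}))^2\mathrm{d}\sigma=o(R^{-N})$ kills, and $f({\bm 0})=0$ handles the origin. The one substantive difference to flag: the paper's modewise proof delivers, essentially for free, the \emph{necessity} of the extremal profile ($f_\nu\equiv0$ for $\nu\ge2$ and the Gaussian radial factor), which is what Section \ref{sec:tor} later uses to classify the extremal toroidal fields; your proof as written only checks sufficiency, which is all the lemma literally claims. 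If you need necessity, your route also yields it with a little more work — equality in the Poincar\'e step forces $f(r\,\cdot)$ to be a degree-one harmonic for every $r$, and equality in the Cauchy--Schwarz step forces $\partial_r g\propto rg$, hence the Gaussian — but you should say so explicitly before relying on the lemma downstream.
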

\begin{proof}
First, let $\nu\in\mathbb{N}$ and let us abbreviate as $f=f({r}{\bm \sigma})$ for ${r}>0$ and ${\bm \sigma}\in\mathbb{S}^{N-1}$. Then  a direct calculation for every $R>0$ yields
\[
\begin{aligned}
\int_0^R &\(\partial_{r}({r}^{-\nu}f)\)^2{r}^{2\nu+N-1}d{r}=\int_0^R\(\partial_{r}f-\nu {r}^{-1}f\)^2{r}^{N-1}d{r}
\\&=\int_0^R\Big(\((\partial_{r}f)^2+\nu^2{r}^{-2}f^2\){r}^{N-1}-2\nu{r}^{N-2}f\partial_{r}f\Big)d{r}
 \\&=\int_0^R\Big(\((\partial_{r}f)^2+\nu^2{r}^{-2}f^2\){r}^{N-1}+\nu(N-2){r}^{N-3}f^2\Big)d{r}-\nu\left[ {r}^{N-2}f^2\right]_{{r}=0}^{{r}=R}
 \\&=\int_0^R\((\partial_{r}f)^2+\alpha_\nu{r}^{-2}f^2\){r}^{N-1}d{r}-\nu R^{N-2}(f(R {\bm \sigma}))^2,
\end{aligned}
\]
where the third equality follows by integration by parts. Hence, by letting $R\to\infty$ after integration of both sides over $\mathbb{S}^{N-1}$, we get
\[
 \int_{\mathbb{R}^N}\(\partial_{r}\({r}^{-\nu}f\)\)^2{r}^{2\nu}dx= \int_{\mathbb{R}^N}\((\partial_{r}f)^2+\alpha_\nu {r}^{-2}f^2\)dx.
\]
Second, let 
\[f=\sum_{\nu\in\mathbb{N}}f_\nu,\qquad  f_\nu\in\mathcal{E}_\nu(\dot{\mathbb{R}}^N)\quad(\forall\nu\in\mathbb{N})\] be the spherical harmonics decomposition of $f$. %
Then we have
 \[
 \begin{aligned}
 \int_{\mathbb{R}^N}|\nabla f|^2dx&
=\sum_{\nu\in\mathbb{N}}\int_{\mathbb{R}^N}|\nabla f_\nu|^2dx
 \\&=\sum_{\nu\in\mathbb{N}}\int_{\mathbb{R}^N}\((\partial_{r}f_\nu)^2+\alpha_\nu{r}^{-2}(f_\nu)^2\)dx
 \\&=\sum_{\nu\in\mathbb{N}}\int_{\mathbb{R}^N}\(\partial_{r}\({r}^{-\nu}f_\nu\)\)^2{r}^{2\nu}dx,%
 \end{aligned}
 \]
 where the last equality follows from the formula derived in the first step. %
Hence, an application of Cauchy-Schwarz inequality yields
 \begin{align}
  \int_{\mathbb{R}^N}&|\nabla f|^2dx \int_{\mathbb{R}^N}|f|^2|{\bm x}|^2dx
  \\&= \(\sum_{\nu\in\mathbb{N}}\int_{\mathbb{R}^N}\(\partial_{r}\({r}^{-\nu}f_\nu\)\)^2|{\bm x}|^{2\nu}dx\)\sum_{\nu\in\mathbb{N}}\int_{\mathbb{R}^N}({r}^{-\nu}f_\nu)^2|{\bm x}|^{2\nu+2}dx
  \\&\ge\(\sum_{\nu\in{\mathbb{N}}}\sqrt{\int_{\mathbb{R}^N}\(\partial_{r}\({r}^{-\nu}f_\nu\)\)^2|{\bm x}|^{2\nu}dx}
  \sqrt{\int_{\mathbb{R}^N}({r}^{-\nu}f_\nu)^2|{\bm x}|^{2\nu+2}dx}\)^2
  \label{CS_tor}
  \\&\ge\(\sum_{\nu\in\mathbb{N}}\int_{\mathbb{R}^N}\(\partial_{r}({r}^{-\nu}f_\nu)\)({r}^{-\nu}f_\nu)|{\bm x}|^{2\nu+1}dx\)^2
  \\&= 
  \(\sum_{\nu\in\mathbb{N}}\mfrac{2\nu+N}{2}\int_{\mathbb{R}^N}\({r}^{-\nu}f_\nu\)^2{r}^{2\nu}dx-\mfrac{1}{2}\lim_{R\to\infty}R^N\int_{\mathbb{S}^{N-1}}\(f(R {\bm \sigma})\)^2\mathrm{d}\sigma\)^2
  \\&= 
  \(\sum_{\nu\in\mathbb{N}}\mfrac{2\nu+N}{2}\int_{\mathbb{R}^N}\({r}^{-\nu}f_\nu\)^2{r}^{2\nu}dx\)^2
  \\&\ge\(\mfrac{N+2}{2}\)^2\(\sum_{\nu\in\mathbb{N}}\int_{\mathbb{R}^N}(f_\nu)^2dx\)^2=\(\mfrac{N+2}{2}\)^2\(\int_{\mathbb{R}^N}f^2dx\)^2,
 \end{align}
 where the equality in the third last line follows by integration by parts with respect to the  measure  $d{r}$, and where the equality in the second last line follows by using the assumption of the lemma. 

Finally, we specify the condition for the equality sign. In order that all the equalities in the above inequalities are simultaneously realized by $f\not\equiv 0$, it must hold that $f_\nu\equiv 0\ (\forall\nu\ge2)$ and that the two integrands in \eqref{CS_tor} are proportional, which leads to
\[
f=f_1\quad\text{ and }\quad \(\partial_{r}({r}^{-1}f_1)\)^2=B^2({r}^{-1}f_1)^2{r}^{2}
\]
for some constant $B\ne 0$, whence
\[
f=f_1\quad\text{ and }\quad  
\partial_{r}({r}^{-1}f_1)=-B{r}\({r}^{-1}f_1\).
\]
 Since ${r}^{-1}f_1({r})$ is convergent  as %
${r}\to\infty$, the solution to this equation must be of the form
\[
f({r}{\bm \sigma})=Y({\bm \sigma}){r}e^{-B{r}^{2}}\quad\text{with}\quad  B>0 ,%
\]
where $Y\in C^\infty(\mathbb{S}^{N-1})$ satisfies the eigenequation $-\triangle_\sigma Y=\alpha_1 Y$. Since the eigenspace of $-\triangle_\sigma$ associated with $\alpha_1$ is spanned by $\{\sigma_1,\cdots,\sigma_N\}$, we consequently have
\[
 f({r}{\bm \sigma})=\sum_{k=1}^NB_k\sigma_k {r}e^{-B{r}^2},\quad\text{that is,}\quad 
f({\bm x})=\sum_{k=1}^N B_kx_k e^{-B|{\bm x}|^2},
\]
as desired. Moreover, a direct computation of the integrals for this case yields
 \begin{equation}
  \left. \begin{aligned}
	  &\int_{\mathbb{R}^N}|\nabla f|^2dx=\frac{N+2}{4}\(\frac{\pi}{2B}\)^{N/2}\sum_{k=1}^N B_k^2,
	  \\& \int_{\mathbb{R}^N}f^2|{\bm x}|^2dx= \frac{N+2}{4(2B)^{2}}\(\frac{\pi}{2B}\)^{N/2}\sum_{k=1}^NB_k^2,
	  \\&
	  \int_{\mathbb{R}^N}f^2dx=\frac{1}{4B}\(\frac{\pi}{2B}\)^{N/2}\sum_{k=1}^N B_k^2,
	 \end{aligned}
  \right\}
\label{BBB}
 \end{equation}
 which clearly achieves the equality in the inequality of the lemma.
\end{proof}
In order that both the equalities of the two inequalities in \eqref{HUP_tor} are simultaneously realized by ${\bm u}\not\equiv {\bm 0}$, it must hold that there exists $C>0$ such that
\begin{align}
 &\int_{\mathbb{R}^N}|\nabla  u_k|^2dx=C \int_{\mathbb{R}^N}(u_k)^2|{\bm x}|^2dx
\\
\text{and}\quad
& \int_{\mathbb{R}^N}|\nabla u_k|^2dx
 \int_{\mathbb{R}^N}(u_k)^2|{\bm x}|^2dx=\(\mfrac{N+2}{2}\)^2\(\int_{\mathbb{R}^N}(u_k)^2dx\)^2
\end{align}
for all $k\in\{1,\cdots,N\}$. Notice here that the second equation requires ${\bm u}$ to be 
\[
\begin{aligned}
 u_k({\bm x})=\sum_{j=1}^N B_{k,j} x_j e^{-B_{k,0}|{\bm x}|^2}
\end{aligned}
\]
with constant numbers $\{B_{k,0}>0\}_{k\in\{1,\cdots,N\}}$ and $\{B_{k,j}\}_{k,j\in\{1,\cdots,N\}}\subset\mathbb{R}$; in view of this together with the first equation, we see from \eqref{BBB} that 
\[
 \begin{aligned}
  \frac{\int_{\mathbb{R}^N}|\nabla u_k|^2dx}{\int_{\mathbb{R}^N}(u_k)^2dx}
=\frac{\frac{N+2}{4}\(\frac{\pi}{2B_{k,0}}\)^{N/2}\sum_{j=1}^N B_{k,j}^2}{ \frac{N+2}{(2B_{k,0})^{2}}\(\frac{\pi}{2B_{k,0}}\)^{N/2}\sum_{j=1}^NB_{k,j}^2}=B_{k,0}^2
\end{aligned}
\]
must be independent of $k$, which is equivalent to $B_{1,0}=B_{2,0}=\cdots=B_{N,0}$.
Therefore, ${\bm u}$ must be of the form
\[
 {\bm u}({\bm x})=e^{-c|{\bm x}|^2}{\bm u}_0({\bm x}), \quad {\bm u}_0({\bm x})=\({\bm c}_1\cdot {\bm x},\ {\bm c}_2\cdot {\bm x},\ \cdots,\ {\bm c}_N\cdot {\bm x}\),
\]
where $c>0$ and where $\left\{{\bm c}_1,{\bm c}_2\cdots,{\bm c}_N\right\}\subset\mathbb{R}^N$ are $N$ constant vectors. Since ${\bm u}$ is toroidal if and only if so is ${\bm u}_0$, it must hold that the constant matrix $({\bm c}_1,\cdots,{\bm c}_N)$ is trace-free and satisfy
\[
\begin{aligned}
 {\bm x}\cdot {\bm u}_0({\bm x})
&= x_1({\bm c}_1\cdot {\bm x})+\cdots +x_N ({\bm c}_N\cdot {\bm x})
 =\sum_{j,k=1}^N c_{jk}x_j x_k
 \\&=\sum_{k=1}^N c_{kk}^2 x_k^2+\sum_{j<k}(c_{jk}+c_{kj})x_jx_k=0\qquad \forall{\bm x}\in\mathbb{R}^N,
\end{aligned}
\]
whence $c_{jk}=-c_{kj}$ $(\forall j,k)$, that is,  $({\bm c}_1,\cdots,{\bm c}_N)$ is antisymmetric. In summary, we have obtained the following result:
\begin{theorem}
\label{theorem:tor}
 Let ${\bm u}\in\mathcal{D}(\mathbb{R}^N)$ be a toroidal field. Then the inequality
\[
 \int_{\mathbb{R}^N}|\nabla {\bm u}|^2dx \int_{\mathbb{R}^N}|{\bm u}|^2|{\bm x}|^2dx\ge C_{T,N}\(\int_{\mathbb{R}^N}|{\bm u}|^2dx\)^2
\]
 holds with the best constant $C_{T,N}=\frac{(N+2)^2}{4}$. Moreover, $C_{T,N}$ is attained by ${\bm u}={\bm u}_0$ with the profile
 \begin{equation}
  {\bm u}_0({\bm x})=\({\bm c}_1\cdot {\bm x},\ \cdots \ , {\bm c}_N\cdot {\bm x}\)e^{-c|{\bm x}|^2},
 \end{equation}
where $c$ is any positive constant number  and $\{{\bm c}_1,\cdots, {\bm c}_N\}\subset\mathbb{R}^N$ are any $N$ constant vectors with the matrix $({\bm c}_1,\cdots,{\bm c}_N)$ being antisymmetric. 
Such ${\bm u}_0$ is unique up to that profile, under the additional condition $\lim_{R\to\infty}R^N\int_{\mathbb{S}^{N-1}}|{\bm u}(R {\bm \sigma})|^2\mathrm{d}\sigma=0$.
\end{theorem}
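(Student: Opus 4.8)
The plan is to deduce the entire statement from the componentwise reduction already displayed in \eqref{HUP_tor} together with Lemma~\ref{lemma:HUP_1}. For the lower bound I would first treat toroidal ${\bm u}\in C_c^\infty(\dot{\mathbb{R}}^N)^N$, where the $o(R^{-N})$ spherical-decay hypothesis of Lemma~\ref{lemma:HUP_1} holds trivially for each component. Writing $|\nabla{\bm u}|^2=\sum_k|\nabla u_k|^2$ and $|{\bm u}|^2=\sum_k u_k^2$, applying the Cauchy--Schwarz inequality $\big(\sum_k a_kb_k\big)^2\le\big(\sum_k a_k^2\big)\big(\sum_k b_k^2\big)$, and then invoking Lemma~\ref{lemma:HUP_1} on each $u_k$---which has zero spherical mean because ${\bm u}$ is toroidal, by Proposition~\ref{prop:prePT}---produces exactly the chain \eqref{HUP_tor} and hence $\int_{\mathbb{R}^N}|\nabla{\bm u}|^2dx\int_{\mathbb{R}^N}|{\bm u}|^2|{\bm x}|^2dx\ge\frac{(N+2)^2}{4}\big(\int_{\mathbb{R}^N}|{\bm u}|^2dx\big)^2$. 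For a general toroidal ${\bm u}\in\mathcal{D}(\mathbb{R}^N)$ I would then pass to the limit along the approximating sequence ${\bm u}_n=\zeta_n{\bm u}\in C_c^\infty(\dot{\mathbb{R}}^N)^N$ of Lemma~\ref{lemma:density} (which stays toroidal since $\zeta_n$ is radial, by Proposition~\ref{prop:prePT}), using that each of the three integrals is continuous with respect to $\|\cdot\|_{\mathcal{D}}$. This gives $C_{T,N}\ge\frac{(N+2)^2}{4}$.

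For attainment I would verify directly that ${\bm u}_0({\bm x})=({\bm c}_1\cdot{\bm x},\dots,{\bm c}_N\cdot{\bm x})\,e^{-c|{\bm x}|^2}$ with $({\bm c}_1,\dots,{\bm c}_N)$ antisymmetric lies in $\mathcal{D}(\mathbb{R}^N)$ and is toroidal: ${\bm x}\cdot{\bm u}_0=\big(\sum_{j,k}c_{jk}x_jx_k\big)e^{-c|{\bm x}|^2}=0$ by antisymmetry, while ${\rm div}\,{\bm u}_0=e^{-c|{\bm x}|^2}\,{\rm tr}({\bm c}_1,\dots,{\bm c}_N)-2c\,({\bm x}\cdot{\bm u}_0)=0$, so its PT decomposition has poloidal potential $\triangle_\sigma^{-1}0=0$. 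Then, applying the explicit integral identities \eqref{BBB} to each component $u_k=\sum_j c_{kj}x_j e^{-c|{\bm x}|^2}$ (so $B_{k,0}=c$, $B_{k,j}=c_{kj}$) and summing over $k$, the three integrals are evaluated and equality in the inequality of the theorem is seen to hold; since also $\lim_{R\to\infty}R^N\int_{\mathbb{S}^{N-1}}|{\bm u}_0(R{\bm \sigma})|^2\mathrm{d}\sigma=0$, this shows $C_{T,N}=\frac{(N+2)^2}{4}$ is attained.

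For the uniqueness of the profile, suppose a toroidal ${\bm u}\not\equiv{\bm 0}$ with $\lim_{R\to\infty}R^N\int_{\mathbb{S}^{N-1}}|{\bm u}(R{\bm \sigma})|^2\mathrm{d}\sigma=0$ realizes equality in \eqref{HUP_tor}. Since the theorem's decay hypothesis transfers to every component, equality in the step that uses Lemma~\ref{lemma:HUP_1} forces each $u_k$ to be an extremal of that lemma, hence $u_k({\bm x})=\sum_j B_{k,j}x_j e^{-B_{k,0}|{\bm x}|^2}$ with $B_{k,0}>0$; equality in the Cauchy--Schwarz step forces $\int_{\mathbb{R}^N}|\nabla u_k|^2dx\big/\int_{\mathbb{R}^N}(u_k)^2|{\bm x}|^2dx$ to be independent of $k$, and by \eqref{BBB} this ratio is a fixed multiple of $B_{k,0}^2$, so $B_{1,0}=\dots=B_{N,0}=:c$. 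Setting $({\bm c}_1,\dots,{\bm c}_N)=(B_{k,j})_{k,j}$, the toroidality condition ${\bm x}\cdot{\bm u}=0$ forces this matrix to be antisymmetric, which is the asserted profile.

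There is essentially no obstacle once Lemma~\ref{lemma:HUP_1} is in hand: that lemma carries all the analytic content (the spherical-harmonic splitting and the sharp one-variable estimate). The only points that need care are (i) justifying the density reduction so that the inequality for general ${\bm u}\in\mathcal{D}(\mathbb{R}^N)$ follows from the compactly supported case, and (ii) checking that in the equality/uniqueness discussion the hypotheses of Lemma~\ref{lemma:HUP_1}, in particular its $o(R^{-N})$ spherical-decay condition, are indeed available componentwise under the pointwise decay condition $R^N\int_{\mathbb{S}^{N-1}}|{\bm u}(R{\bm \sigma})|^2\mathrm{d}\sigma\to0$ assumed in the theorem.
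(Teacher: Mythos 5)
Your proposal is correct and follows essentially the same route as the paper: the componentwise Cauchy--Schwarz reduction \eqref{HUP_tor}, Lemma~\ref{lemma:HUP_1} applied to each zero-spherical-mean component, and the equality analysis forcing a common Gaussian rate and an antisymmetric coefficient matrix via ${\bm x}\cdot{\bm u}_0=0$. The only difference is that you make explicit the density step (first compactly supported fields, then Lemma~\ref{lemma:density}) to handle the $o(R^{-N})$ decay hypothesis of Lemma~\ref{lemma:HUP_1}, which the paper leaves implicit; this is a reasonable tightening rather than a different approach.
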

In view of this result and \eqref{CN_CFL}, 
we find that the toroidal condition gives the same best HUP constant as the curl-free condition.
\section{The case \mbox{\boldmath $u=u_{P}$}: evaluation of $C_{P,N}$}
 \label{sec:pol}
 Throughout this section, ${\bm u}$ is a smooth poloidal field with compact support on $\dot{\mathbb{R}}^N$ expressed as
\[{\bm u}%
={\bm D}f,%
\]
where $f\in C_c^\infty(\dot{\mathbb{R}}^N)\setminus\{{\bm 0}\}$ is of zero spherical mean.%
\subsection{Spherical harmonics decomposition of poloidal fields} 

In the same way as in the proof of Lemma~\ref{lemma:HUP_1}, we again express the spherical harmonics expansion of $f$ as
\[
 f=\sum_{\nu\in\mathbb{N}}f_\nu,\ \quad f_\nu\in\mathcal{E}_\nu(\dot{\mathbb{R}}^N)\qquad(\forall\nu\in\mathbb{N}),
\]
and we write as
\[
 {\bm u}_\nu={\bm D}f_\nu %
\qquad(\forall\nu\in\mathbb{N})
\]
which clearly belongs to $\mathcal{P}_\nu(\dot{\mathbb{R}}^N)$. 
Then %
an application of Cauchy Schwarz inequality by way of the $L^2(\mathbb{S}^{N-1})$-orthogonality formulae \eqref{SH_orth} yields that%
\[
 \begin{aligned}
  \int_{\mathbb{R}^N}|\nabla {\bm u}|^2dx
  \int_{\mathbb{R}^N}&|{\bm u}|^2|{\bm x}|^2dx=\(\sum_{\nu\in\mathbb{N}}\int_{\mathbb{R}^N}|\nabla {\bm u}_\nu|^2dx\)\(\sum_{\nu\in\mathbb{N}}\int_{\mathbb{R}^N}|{\bm u}_\nu|^2|{\bm x}|^2dx\)
\\&\ge \sum_{\nu\in\mathbb{N}}\int_{\mathbb{R}^N}|\nabla {\bm u}_\nu|^2dx \int_{\mathbb{R}^N}|{\bm u}_\nu|^2|{\bm x}|^2dx
\\&\ge\sum_{\nu\in\mathbb{N}}
C_{P,N,\nu}\int_{\mathbb{R}^N}|{\bm u}_\nu|^2dx
\ge\inf_{\nu\in\mathbb{N}}C_{P,N,\nu}\int_{\mathbb{R}^N}|{\bm u}|^2dx.
 \end{aligned}
\]
Here, for every $\nu\in\mathbb{N}$, the notation%
\begin{equation}
 C_{P,N,\nu}=\inf_{{\bm u}\in {\bm D}\mathcal{E}_\nu}\frac{\int_{\mathbb{R}^N}|\nabla {\bm u}|^2dx\int_{\mathbb{R}^N}|{\bm u}|^2|{\bm x}|^2dx}{\int_{\mathbb{R}^N}|{\bm u}|^2|{\bm x}|^2dx}%
\label{CPNn}
\end{equation}
denotes the best $\rm HUP$ constant for the poloidal fields (in $C_c^\infty(\dot{\mathbb{R}}^N)^N$) generated by $\mathcal{E}_\nu(\dot{\mathbb{R}}^N)$, %
where the abbreviation ${\bm u}\in {\bm D}\mathcal{E}_\nu$ under the infimum sign means that 
\[{\bm u}\in%
\left\{{\bm D}f\ ;\ f\in \mathcal{E}_\nu(\dot{\mathbb{R}}^N)\cap C_c^\infty(\dot{\mathbb{R}}^N)\setminus\{0\}\right\}.%
\] 
Then we find the inequality $C_{P,N}\ge \inf_{\nu\in\mathbb{N}}C_{P,N,\nu}$ from the definition of $C_{P,N}$. Since the reverse $C_{P,N}\le \inf_{\nu\in\mathbb{N}}C_{P,N,\nu}$ also holds true, it turns out that 
\begin{equation}
 C_{P,N}=\inf_{\nu\in\mathbb{N}}C_{P,N,\nu}.
\label{C_PN}
\end{equation}
Therefore, the problem of computing $C_{P,N}$ is reduced to that of $C_{P,N,\nu}$. 
\subsection{Evaluation of $C_{P,N,\nu}$}
\label{subsec:C_PNn}
Here we fix $\nu\in\mathbb{N}$ and assume that ${\bm u}$ is of the form
\begin{equation}
 {\bm u}={\bm D}f,\quad f\in \mathcal{E}_\nu(\dot{\mathbb{R}}^N)\cap C_c^\infty(\dot{\mathbb{R}}^N).
\end{equation}
For simplicity, we use the  abbreviations
\begin{equation}
 \alpha=\alpha_\nu, \quad \partial={r}\partial_{r}\quad\text{ and }\quad \partial'={r}\partial_{r}'=\partial+N-1.
\end{equation}
In order to evaluate $C_{P,N,\nu}$, we have to compute the integrals in the HUP inequality. 
To this end, a direct calculation yields
\begin{align}
 {\bm u}&=\({\bm \sigma}\triangle_\sigma -{r}\partial_{r}'\nabla_{\!\sigma}\)f=-{\bm \sigma}\alpha f-\nabla_{\!\sigma}\partial' f,
 \\
 \partial {\bm u}&=-{\bm \sigma}\alpha\partial f-\partial' \partial\nabla_{\!\sigma}f,
 \\
 - \triangle_\sigma {\bm u}&=\alpha\:\!\triangle_\sigma ({\bm \sigma}f)+\partial'\triangle_\sigma\nabla_{\!\sigma}f
 \\&=\alpha \({\bm \sigma}\(\triangle_\sigma f-(N-1)f\)+2\nabla_{\!\sigma}f\)
 \\&\quad +\partial'\(\nabla_{\!\sigma}\triangle_\sigma f-2 {\bm \sigma}\triangle_\sigma f +(N-3)\nabla_{\!\sigma}f\)
 \\&=\alpha\(-{\bm \sigma}\(\alpha+N-1\)f+2\nabla_{\!\sigma}f\)
 \\*&\quad +\partial'\(-\alpha\nabla_{\!\sigma}f+2 {\bm \sigma}\alpha f+(N-3)\nabla_{\!\sigma}f\)
 \\&=\alpha {\bm \sigma}\(2\:\! \partial'f-(\alpha+N-1)f\)+\nabla_{\!\sigma}\(2\:\!\alpha f+\(-\alpha+N-3\)\partial' f\)
 \\&=\alpha{\bm \sigma}\(2\partial f-(\alpha-N+1)f\)+\nabla_{\!\sigma}\(2\:\!\alpha f+(-\alpha+N-3)\partial'f\),
\end{align}
where the equality in the fourth line follows by using \eqref{comm}. Taking the absolute square or scalar product of them yields
\begin{align}
 |{\bm u}|^2&=\alpha^2 f^2+|\nabla_{\!\sigma}\partial'f|^2,%
\\
|\partial {\bm u}|^2&=\alpha^2(\partial f)^2+|\nabla_{\!\sigma}\partial'\partial f|^2,
\\
 -{\bm u}\cdot\triangle_\sigma {\bm u}
&= -\alpha^2 f\big(2\partial f-(\alpha-N+1)f\big)
 \\
 &\quad -\nabla_{\!\sigma}(\partial'f)\cdot\nabla_{\!\sigma}\big(2\alpha f+(-\alpha+N-3)\partial'f\big).
\end{align}
Integration by parts of them over $\mathbb{S}^{N-1}$ (for any ${r}$) then yields the following calculations:
\begin{align}
\frac{1}{\alpha} &\int_{\mathbb{S}^{N-1}} |{\bm u}|^2\mathrm{d}\sigma%
  = \int_{\mathbb{S}^{N-1}}\(\alpha f^2+\(\partial'f\)^2\)\mathrm{d}\sigma,
\\&= \int_{\mathbb{S}^{N-1}}\(\alpha f^2+(\partial f+(N-1)f)^2\)\mathrm{d}\sigma
 \\&= \int_{\mathbb{S}^{N-1}}\Big(\(\alpha+(N-1)^2\)f^2+(\partial f)^2+2(N-1) f \partial f\Big)\mathrm{d}\sigma,
\label{L2S_u}
\\
\frac{1}{\alpha}& \int_{\mathbb{S}^{N-1}}|\partial {\bm u}|^2\mathrm{d}\sigma
 \\&=\int_{\mathbb{S}^{N-1}}\Big(\(\alpha+(N-1)^2\)(\partial f)^2+(\partial^2f)^2+2(N-1)(\partial f)\partial^2f\Big)\mathrm{d}\sigma\qquad 
\label{L2S_du}
\intertext{and}  
\frac{1}{\alpha}&  \int_{\mathbb{S}^{N-1}}|\nabla_{\!\sigma}{\bm u}|^2\mathrm{d}\sigma=\frac{1}{\alpha}\int_{\mathbb{S}^{N-1}}\(
  \begin{aligned}
   &-\alpha^2 f\big(2\partial f-(\alpha-N+1)f\big)
   \\
   & +(\triangle_\sigma \partial'f)\(2\alpha f+(-\alpha+N-3)\partial'f\)
  \end{aligned}
  \)\mathrm{d}\sigma
  \\&=\int_{\mathbb{S}^{N-1}}
  \(\begin{aligned}
     &  -2\alpha f\partial f+\alpha(\alpha-N+1)f^2
     \\
     & + (\partial'f)\Big(-2\alpha f+(\alpha-N+3)\partial'f\Big)
    \end{aligned}\)
  \mathrm{d}\sigma
  \\&= \mathop{\int}_{\mathbb{S}^{N-1}}\(
  \begin{aligned}
   &(\alpha-N+3)(\partial f)^2			\\&
   +\(\begin{aligned}
       &   -2\alpha+\big(-2\alpha+(\alpha-N+3)(N-1)\big)
      \\& +(N-1)\(\alpha-N+3\)\end{aligned}
   \)f\partial f
  \\&+\Big(\alpha(\alpha-N+1)+(N-1)\big(-2\alpha+(\alpha-N+3)(N-1)\big)\Big)f^2				      \end{aligned}
  \)\mathrm{d}\sigma
  \\&= \int_{\mathbb{S}^{N-1}}\(
  \begin{aligned}
   &(\alpha-N+3)(\partial f)^2+2(N-3)(\alpha-N+1)f\partial f
   \\&+(\alpha-N+1)\big(\alpha+(N-1)(N-3)\big)f^2
  \end{aligned}\)
  \mathrm{d}\sigma.
\label{L2S_Su}
 \end{align}
Now, along the idea of \cite{CFL_HUP,Tertikas-Z}, let us introduce the transformation
\begin{equation}
f={r}^{\lambda} g
\end{equation}
with $\lambda\in\mathbb{R}$ determined later, 
and notice for any $\beta\in\mathbb{R}$ and ${\bm \sigma}\in\mathbb{S}^{N-1}$ the integration by parts formulae
\begin{align}
 \int_{\mathbb{R}_+}&(f\partial f){r}^{\beta-1}d{r}=-\frac{\beta}{2}\int_{\mathbb{R}_+}f^2 {r}^{\beta-1} d{r}=-\frac{\beta}{2}\int_{\mathbb{R}_+}g^2{r}^{2\lambda+\beta-1}d{r},
\label{fdf}
 \\
 \int_{\mathbb{R}_+}&(\partial f)^2{r}^{\beta-1} dr
  =\int_{\mathbb{R}_+}\(\lambda {g}+\partial {g}\)^2r^{\beta+2\lambda-1} dr
 \\&= \int_{\mathbb{R}_+}\(\lambda^2{g}^2+(\partial {g})^2+2\lambda{g}\partial {g}\){r}^{\beta+2\lambda}dt
 \\&=\int_{\mathbb{R}_+}\((\partial {g})^2-\lambda\(\lambda+\beta\){g}^2\)r^{\beta+2\lambda-1}dr,
\label{df2}
\\  \int_{\mathbb{R}_+}&(\partial f)(\partial^2 f){r}^{\beta-1}dr=-\frac{\beta}{2}\int_{\mathbb{R}_+}(\partial f)^2{r}^{\beta-1} d{r}
\\&=-\frac{\beta}{2}\int_{\mathbb{R}_+}\((\partial g)^2-\lambda(\lambda+\beta)g^2\){r}^{\beta+2\lambda-1}d{r},
\label{dfddf}
\\
\int_{\mathbb{R}_+}&\({g}\partial^2 {g}\)\,{r}^{\beta-1}dr=\int_{\mathbb{R}}\(\partial \({g}\partial {g}\)-(\partial {g})^2\){r}^{\beta-1} d{r}
 \\&=\int_{\mathbb{R}}\(-\beta {g}\partial {g}-(\partial {g})^2\){r}^{\beta-1} d{r}=\int_{\mathbb{R}_+}\(\frac{\beta^2}{2}{g}^2-(\partial {g})^2\){r}^{\beta-1} d{r}
\label{gddg}
\\
 \int_{\mathbb{R}_+}&  (\partial^2 f)^2{r}^{\beta-1} d{r}
=\int_{\mathbb{R}_+}\(\lambda^2{g}+2\lambda\partial {g}+\partial^2 {g}\)^2{r}^{2\lambda+{\beta-1}} d{r}
  \\&=\int_{\mathbb{R}_+}
\(\begin{aligned}
&\lambda^4{g}^2+4\lambda^2(\partial {g})^2+(\partial^2 {g})^2+4\lambda^3{g}\partial {g}
\\&+2\lambda^2{g}\partial^2 {g}+4\lambda(\partial {g})\partial^2 {g}\end{aligned}
\){r}^{2\lambda+\beta-1}d{r}
  \\&=\int_{\mathbb{R}_+}\(
  \begin{aligned}
   & \lambda^4{g}^2+4\lambda^2(\partial {g})^2+(\partial^2 {g})^2-2\lambda^3(2\lambda+\beta){g}^2
   \\&+2\lambda^2\(\tfrac{(2\lambda+\beta)^2}{2}{g}^2-(\partial {g})^2\)
  -2\lambda(2\lambda+\beta)(\partial {g})^2
\end{aligned}
  \){r}^{2\lambda+\beta-1}d{r}
  \\&=\int_{\mathbb{R}_+}\Big(\lambda^2(\lambda+\beta)^2{g}^2-2\lambda\(\lambda+\beta\)(\partial {g})^2+(\partial^2 {g})^2\Big){r}^{2\lambda+\beta-1}d{r},\label{ddf2}
\end{align} 
where the second last equality follows by applying \eqref{gddg} to $2\lambda+\beta$ instead of $\beta$. Then, by way of applying \eqref{df2} and \eqref{fdf} to $\beta+N$ instead of $\beta$, the integration of \eqref{L2S_u}  with respect to the measure ${r}^{\beta+N-1}d{r}$ yields 
\begin{align}
 \frac{1}{\alpha}&\int_{\mathbb{R}^N}|{\bm u}|^2|{\bm x}|^\beta dx=\frac{1}{\alpha}\iint_{\mathbb{R}_+\times\mathbb{S}^{N-1}}|{\bm u}|^2{r}^{\beta+N-1}d{r}\mathrm{d}\sigma
\\&= \iint_{\mathbb{R}_+\times\mathbb{S}^{N-1}}\Big(\(\alpha+(N-1)^2\)f^2+(\partial f)^2+2(N-1) f \partial f\Big){r}^{\beta+N-1}\mathrm{d}\sigma
 \\&=\iint_{\mathbb{R}_+\times\mathbb{S}^{N-1}}\(
\begin{aligned}
& \(\alpha+(N-1)^2\)g^2+(\partial {g})^2
\\&-\lambda\(\lambda+\beta+N\){g}^2-(N-1)(\beta+N)g^2
\end{aligned}
\){r}^{2\lambda+\beta+N-1}d{r}\mathrm{d}\sigma,
\\&=\iint_{\mathbb{R}_+\times\mathbb{S}^{N-1}}\bigg( (\partial g)^2+\Big(\alpha-\alpha_{\lambda+1}-(\lambda+N-1)\beta\Big)g^2\bigg){r}^{2\lambda+\beta+N-1}d{r}\mathrm{d}\sigma.
\end{align}
By applying \eqref{df2},  \eqref{ddf2}, \eqref{dfddf} and \eqref{fdf} to the case $\beta=N-2$, the integration of $\eqref{L2S_du}+\eqref{L2S_Su}$ with respect to the measure ${r}^{N-1}d{r}$ also yields
\begin{align}
\frac{1}{\alpha}& \int_{\mathbb{R}^N}|\nabla {\bm u}|^2dx=\frac{1}{\alpha}\iint_{\mathbb{R}_+\times\mathbb{S}^{N-1}}
\(|\partial {\bm u}|^2+|\nabla_{\!\sigma}{\bm u}|^2\){r}^{N-3}d{r}\mathrm{d}\sigma
\\&= \mathop{\iint}_{\mathbb{R}_+\times\mathbb{S}^{N-1}}\(
  \begin{aligned}
   &\(2\alpha+N^2-3N+4\)(\partial f)^2+(\partial^2f)^2
\\&+2(N-1)(\partial f)\partial^2f+2(N-3)(\alpha-N+1)f\partial f
   \\&+(\alpha-N+1)\big(\alpha+(N-1)(N-3)\big)f^2
  \end{aligned}
\){r}^{N-3}d{r}\mathrm{d}\sigma
\\&=\mathop{\iint}_{\mathbb{R}_+\times\mathbb{S}^{N-1}}\(
\begin{aligned}
 &\(2\alpha+N^2-3N+4\)\((\partial g)^2-\alpha_{\lambda}g^2\)
\\&+\alpha_\lambda^2g^2-2\alpha_{\lambda} (\partial g)^2+(\partial^2 g)^2
\\&-(N-1)(N-2)\((\partial g)^2-\alpha_{\lambda} g^2\)
\\&-(N-3)(\alpha-N+1)(N-2)g^2
\\&+(\alpha-N+1)\(\alpha+(N-1)(N-3)\)g^2
\end{aligned}
\){r}^{2\lambda+N-3}d{r}\mathrm{d}\sigma
\\&=\iint_{\mathbb{R}_+\times\mathbb{S}^{N-1}}\(
\begin{aligned}
&(\partial^2 g)^2+2(\alpha-\alpha_\lambda+1)(\partial g)^2		
\\&+\(\alpha-\alpha_{\lambda-1}\)\(\alpha-\alpha_{\lambda+1}\)
g^2				\end{aligned}
\){r}^{2\lambda+N-3}d{r}\:\!\mathrm{d}\sigma.
\end{align}
Now, we choose
\[
 \lambda=\nu-1
\]
in order that $\alpha_{\lambda+1}=\alpha_\nu\(=\alpha\)$. 
Then the above results turn into
\begin{align}
 \frac{1}{\alpha}\int_{\mathbb{R}^N}|{\bm u}|^2|{\bm x}|^\beta dx
&=\iint_{\mathbb{R}_+\times\mathbb{S}^{N-1}}\Big( (\partial g)^2 -(\nu+N-2)\beta g^2\Big){r}^{2\nu+\beta+N-3}d{r}\mathrm{d}\sigma,
\\
\frac{1}{\alpha}\int_{\mathbb{R}^N}|\nabla {\bm u}|^2dx&=\iint_{\mathbb{R}_+\times\mathbb{S}^{N-1}}\Big(
(\partial^2 g)^2+2(2\nu+N-2)(\partial g)^2		
\Big){r}^{2\nu+N-5}d{r}\:\!\mathrm{d}\sigma.
\end{align}
Furthermore, let us change the radial coordinate from ${r}$ into
\[
 {s}={r}^2
\]
which obeys the chain rules: $ds=2{r}d{r}$ %
and  $\partial%
=2s\partial_s$. %
Abbreviating as $g=g({\bm \sigma}\sqrt{s})$ and $\partial_{s}g=\partial_{s}\(g({\bm \sigma}\sqrt{s})\)$, we have the following calculation:
\[
 \begin{aligned}
 \frac{1}{\alpha_\nu}&\int_{\mathbb{R}^N}|{\bm u}|^2|{\bm x}|^\beta dx
=2\iint_{\mathbb{R}_+\times\mathbb{S}^{N-1}}\Big(
  ({s}\partial_{s} g)^2-\tfrac{\nu+N-2}{4}\beta g^2\Big){s}^{\nu+\frac{\beta+N-4}{2}}d{s}\mathrm{d}\sigma,
\\
\frac{1}{\alpha_\nu}&\int_{\mathbb{R}^N}|\nabla{\bm u}|^2 dx
\\&=\iint_{\mathbb{R}_+\times\mathbb{S}^{N-1}}
\Big(   \((2s\partial_s)^2g\)^2+2(2\nu+N-2)(2s\partial_sg)^2
\Big)
(\sqrt{s})^{2\nu+N-6}\frac{ds}{2}\mathrm{d}\sigma 
\\&=4\iint_{\mathbb{R}_+\times\mathbb{S}^{N-1}}
  \Big(2\big({s}\partial_s(s\partial_s g)\big)^2+(2\nu+N-2)(s\partial_sg)^2
\Big)
  {s}^{\nu+\frac{N}{2}-3}ds\:\!\mathrm{d}\sigma
\\&=4\iint_{\mathbb{R}_+\times\mathbb{S}^{N-1}}
\( 2\({s}\partial_{s}g+s^2\partial_{s}^2g\)^2+(2\nu+N-2)(s\partial_sg)^2
\)
  {s}^{\nu+\frac{N}{2}-3}ds\:\!\mathrm{d}\sigma
\\&=4\mathop{\iint}_{\mathbb{R}_+\times\mathbb{S}^{N-1}}\Big(
2\big({s}^2\partial_{s}^2g\big)^2+4{s}^3(\partial_{s}^2g)\partial_{s}g
+(2\nu+N){s}^2(\partial_{s}g)^2
\Big)
  {s}^{\nu+\frac{N}{2}-3}d{s}\:\!\mathrm{d}\sigma
  \\&=8\iint_{\mathbb{R}_+\times\mathbb{S}^{N-1}}({s}^2\partial_{s}^2g)^2
  {s}^{\nu+\frac{N}{2}-3}d{s}\:\!\mathrm{d}\sigma.
 \end{aligned}
\]
Therefore, we have obtained
 \begin{equation}
\begin{aligned}
 \int_{\mathbb{R}^N}|{\bm u}|^2|{\bm x}|^{2\beta} dx
 &=2\alpha_\nu\int_{\mathbb{S}^{N-1}}P_\beta [g]\,\mathrm{d}\sigma ,
 \\
 \int_{\mathbb{R}^N}|\nabla {\bm u}|^2 dx
 &=\red{8\alpha_\nu\int_{\mathbb{S}^{N-1}}Q[g]\,\mathrm{d}\sigma}
\end{aligned} 
\end{equation}
for any $\beta\in\mathbb{R}$, where we put
\begin{equation}
 \left\{\begin{aligned}
 P_{\beta}[g]&:=\int_{\mathbb{R}_+}\Big(
  ({s}\partial_{s} g)^2-\tfrac{\nu+N-2}{2}\beta g^2\Big){s}^{\nu+\frac{N}{2}+\beta-2}d{s},
\\
Q[g]&:=\red{\int_{\mathbb{R}_+}(\partial_{s}^2g)^2
  {s}^{\nu+\frac{N}{2}+1}d{s}},
\end{aligned}
\right.
\end{equation}
which we view as functionals on the set of the one-dimensional functions \[\left\{{s}\mapsto g({\bm \sigma}\sqrt{s})\right\}_{{\bm \sigma}\in\mathbb{S}^{N-1}}\subset C_c^\infty(\mathbb{R}_+)\]
parameterized by ${\bm \sigma}$.
By an application of Cauchy-Schwarz inequality, we then get
\[
\begin{aligned}
&\frac{\int_{\mathbb{R}^N}|\nabla {\bm u}|^2dx\int_{\mathbb{R}^N}|{\bm u}|^2|{\bm x}|^2dx}{\red{4\(\int_{\mathbb{R}^N}|{\bm u}|^2dx\)^2}}
 =\frac{\int_{\mathbb{S}^{N-1}}Q[g]\mathrm{d}\sigma \int_{\mathbb{S}^{N-1}}P_1[g]\mathrm{d}\sigma}{\(\int_{\mathbb{S}^{N-1}}P_0[g]\mathrm{d}\sigma\)^2}
 \\&\quad \ge \(\frac{\int_{\mathbb{S}^{N-1}}\sqrt{Q[g]P_1[g]}\,\mathrm{d}\sigma}{\int_{\mathbb{S}^{N-1}}P_0[g]\mathrm{d}\sigma}\)^2
\ge\inf_{g\in C_c^\infty(\mathbb{R}_+)}\(\frac{\sqrt{Q[g]P_1[g]}}{P_0[g]}\)^2,
\end{aligned}
\]
where and hereafter we abbreviate as $\displaystyle\inf_{g\in C_c^\infty(\mathbb{R}_+)}$ instead of $\displaystyle\inf_{g\in C_c^\infty(\mathbb{R}_+)\setminus\{0\}}$. 
Taking the infimum of both sides over ${\bm u}\in {\bm D}\mathcal{E}_\nu$ then yields from \eqref{CPNn} the inequality
\[
C_{P,N,\nu}\ge \red{4}\inf_{g\in C_c^\infty(\mathbb{R}_+)}R[g],\quad\text{ where we put}\quad R[g]=\frac{Q[g]P_1[g]}{\(P_0[g]\)^2}.
\]
On the other hand, for any $g\in C_c^\infty(\mathbb{R}_+)\setminus\{0\}$ we set 
\begin{equation}
{\bm u}({\bm x})={\bm D}\Big(|{\bm x}|^{\nu-1}g(|{\bm x}|^2)Y({\bm x}/|{\bm x}|)\Big)%
\label{f=gY}
\end{equation}
in order that ${\bm u}\in {\bm D}\mathcal{E}_\nu$, 
where $Y\in C^\infty(\mathbb{S}^{N-1})$ is any one of the spherical harmonic functions of degree $\nu$. 
Then, after applying to \eqref{f=gY} the same calculations as above, we see that
%Then the same calculation as above yields 
 \begin{equation}
 \mathop{\int}_{\mathbb{R}^N}|{\bm u}|^2|{\bm x}|^{2\beta} dx
 =2\alpha_\nu P_\beta [g]\mathop{\int}_{\mathbb{S}^{N-1}}Y\mathrm{d}\sigma 
 \quad\text{ and }\quad 
 \mathop{\int}_{\mathbb{R}^N}|\nabla {\bm u}|^2 dx
 =\red{8\alpha_\nu Q[g]}\mathop{\int}_{\mathbb{S}^{N-1}}Y\mathrm{d}\sigma,
\end{equation}
whence we see that 
\begin{equation}
\begin{gathered}
c\int_{\mathbb{R}^N}|{\bm u}|^2|{\bm x}|^{2\beta}dx=P_\beta [g]
\quad\text{ and }\quad c\int_{\mathbb{R}^N}|\nabla {\bm u}|^2dx=\red{4\;\!Q[g]}
\end{gathered}
\label{Df_Rg}%
 \end{equation}
hold with some constant $c>0$ depending only on $N$ and $\nu$; in particular, we get
\[
  C_{P,N,\nu}\le \frac{\int_{\mathbb{R}^N}|\nabla {\bm u}|^2dx\int_{\mathbb{R}^N}|{\bm u}|^2|{\bm x}|^2dx}{\(\int_{\mathbb{R}^N}|{\bm u}|^2dx\)^2}=\red{4\:\!R[g]}.
\]
Taking the infimum of both sides over $g\in C_c^\infty(\mathbb{R}_+)\setminus\{0\}$ yields
\[\quad C_{P,N,\nu}\le \red{4}\inf_{g\in C_c^\infty(\mathbb{R}_+)}R[g]\]
as the reverse to the aforementioned inequality, whence we consequently obtain
\begin{equation}
 C_{P,N,\nu}=\red{4}\inf_{g\in C_c^\infty(\mathbb{R}_+)}R[g].
\label{C_PNn}
\end{equation}

In summary, the evaluation of $C_{P,N,\nu}$ is reduced to the one-dimensional minimization problem for the functional
\begin{equation}
 R[g]=\frac{Q[g]P_1[g]}{(P_0[g])^2}%
\qquad\text{for}\quad  {g}={g}({x})\in C_c^\infty(\mathbb{R}_+)\setminus\{0\},
\label{Rg}
\end{equation}
where \red{the factors of $R[g]$ are given by}
\begin{equation}
 \left\{\begin{aligned}
	 P_\beta[g]&=\int_0^\infty\Big(
	{x}^2(\red{g'})^2-\beta{\varepsilon} g^2\Big){x}^{\mu+\beta-2}d{x}\qquad\red{(\beta=0,1)},
	\\
	Q[g]&= \int_0^\infty(\red{g''})^2
	{x}^{\mu+1}d{x},
       \end{aligned}
\ \right.
\label{P0P2Q}
\end{equation}
\red{with the notations $g'=\dfrac{dg}{dx}$, \ $g''=\dfrac{d^2g}{dx^2}$},
 \begin{equation}
	       \mu=\frac{N}{2}+\nu \quad \text{ and }
\quad  {\varepsilon}=\frac{\nu+N-2}{2}.
\label{mu_and_ep}
\end{equation} 
.

 \section{Solution to the minimization problem for $R[g]$}
\label{sec:Laguerre}
\red{The goal of this section is to solve the one-dimensional minimization problem for the functional \eqref{Rg}; however, we treat $\mu$ and $\nu$ as continuous real positive parameters %$\mu>0$ and $\varepsilon>0$ such that $\mu\ge2\sqrt{\varepsilon}$, 
including \eqref{mu_and_ep} as special cases.  
Moreover,  we consider the test functions $g$ in a slightly wider space than $C_c^\infty(\mathbb{R}_+)$ which in particular admits those with non-zero values at the origin.}

\red{Let $\mu>0$ and let $C_\mu^\infty(\mathbb{R}_+)$ be the set of $\mathbb{R}$-valued smooth functions ${g}$ on the half line $\mathbb{R}_+=\{x>0\}$ satisfying
\begin{equation}
\begin{cases}
\displaystyle\ 
\limsup_{x\to 0}\(|g(x)|+|g'(x)|\)<\infty,
\vspace{0.5em}\\
 \displaystyle\lim_{x\to\infty}\(x^{\mu+1}({g}'(x))^2+x^\mu \({g}(x)\)^2\)=0
,
\end{cases}
\label{boundary_f}
\end{equation}
where and hereafter we use the notations ${g}'=\dfrac{d{g}}{dx}$ and ${g}''=\dfrac{d^2{g}}{dx^2}$. 
We work on the space $\mathcal{D}_\mu(\mathbb{R}_+)$ defined as the Hilbert space completion of the set
\[
 \left\{{g}\in C_\mu^\infty(\mathbb{R}_+)\ ;\ \|{g}\|_{\mathcal{D}_\mu}<\infty\right\}
\]
with respect to the norm $\|\cdot\|_{\mathcal{D}_\mu}$ given by
\begin{equation}
\|{g}\|_{\mathcal{D}_\mu}:=\(4\int_0^\infty({g}'')^2x^{\mu+1}dx+\int_0^\infty({g}')^2(x+1)x^\mu dx+\int_0^\infty {g}^2 x^{\mu-1}dx\)^{\frac{1}{2}} ,
\label{norm_Dm}
\end{equation}
in order that all the integrals in \eqref{P0P2Q} are well defined for all $g\in\mathcal{D}_\mu(\mathbb{R}_+)$ in the standard sense. 
In the above setting, the answer to the one-dimensional minimization problem %for the functional $R[g]$ for $g\in\mathcal{D}_\mu(\mathbb{R}_+)\setminus\{0\}$ 
is given as follows:}
\begin{theorem}
\label{theorem:3}
\red{
Let $\varepsilon>0$ with $ \varepsilon<\mu^2/4$. Then the inequality 
 \begin{equation}
\begin{aligned}
     \int_0^\infty (g'')^2x^{\mu+1}dx\int_0^\infty &\(x^2(g')^2-\varepsilon g^2\)x^{\mu-1} dx
\\&\ge\frac{1}{4}\(\sqrt{\mu^2-4\:\!\varepsilon}+1\)^2 \int_0^\infty \(g'\)^2x^\mu dx
\end{aligned}
\label{CKNe}
 \end{equation}
 holds for all $g\in \mathcal{D}_\mu(\mathbb{R}_+)$ with the best constant factor on the right-hand side. 
Moreover, all the functions achieving the best constant is given by the set %$f(x)$ proportional to 
\[
\left\{C e^{-\lambda x}{}_1F_1(b,\mu,\lambda x)\ ;\ C\in\mathbb{R},\ \lambda>0\right\},
\]}%
where ${}_1F_1(b,\mu,\cdot)$ is  Kummer's confluent hypergeometric function (see e.g. \cite{Abramowitz1972}) given by
 \begin{equation}
  {}_1F_1(b,\mu,x)=\sum_{k=0}^\infty 
   \frac{(b)_k}{(\mu)_k} 
   \frac{x^k}{k!}
   \quad\text{ with}\quad b=\frac{\mu-\sqrt{\mu^2-4\:\!{\varepsilon} }}{2},
\qquad 
\label{1F1}
 \end{equation}
where \red{$(q)_k=\left\{\begin{array}{ll}
1&(k=0)	    
\\ q(q+1)(q+2)\cdots (q+k-1)&(k\in\mathbb{N})\end{array}\right.
$ denotes the rising factorial.} %Pochhammer symbol.
\end{theorem}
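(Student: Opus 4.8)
\emph{Overview.} The plan is to read \eqref{CKNe} as a weighted Heisenberg-type product inequality and to prove it by a single application of the Cauchy--Schwarz inequality in $L^2(x^{\mu+1}dx)$, after writing the three integrals in \eqref{P0P2Q} as squared norms and an inner product of explicit first- and second-order expressions in $g$. Put the weight $w(x)=x^{\mu+1}$, let $b=\bigl(\mu-\sqrt{\mu^2-4\varepsilon}\bigr)/2$ be the smaller root of $t(\mu-t)=\varepsilon$ (so $0<b<\mu/2$ and $b(\mu-b)=\varepsilon$, using $0<\varepsilon<\mu^2/4$), and set $c:=\bigl(1+\sqrt{\mu^2-4\varepsilon}\bigr)/2=\tfrac{\mu+1}{2}-b$, so that the claimed best constant is $c^2$. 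The heart of the proof is the following three identities, to be verified for $g\in C^\infty_\mu(\mathbb{R}_+)$ with $\|g\|_{\mathcal{D}_\mu}<\infty$:
\[
 \Bigl\|\,g''+\tfrac{\mu}{x}g'\,\Bigr\|_{L^2(w\,dx)}^2=Q[g],\qquad \Bigl\|\,g'+\tfrac{\mu-b}{x}g\,\Bigr\|_{L^2(w\,dx)}^2=P_1[g],
\]
\[
 \Bigl\langle\, g''+\tfrac{\mu}{x}g'\ ,\ g'+\tfrac{\mu-b}{x}g\,\Bigr\rangle_{L^2(w\,dx)}=-\,c\,P_0[g].
\]
Each is obtained by expanding the square (resp.\ the product) and integrating by parts in $x$. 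In the first, $\int g''g'x^{\mu}dx=-\tfrac{\mu}{2}\int(g')^2x^{\mu-1}dx$ exactly cancels the term $\mu^2\int(g')^2x^{\mu-1}dx$. In the second, $\int g'g\,x^{\mu}dx=-\tfrac{\mu}{2}\int g^2x^{\mu-1}dx$ together with $b(\mu-b)=\varepsilon$ leaves precisely the $-\varepsilon g^2$ term. In the third, $\int g''g'x^{\mu+1}dx=-\tfrac{\mu+1}{2}P_0[g]$ and $\int g''g\,x^{\mu}dx=-P_0[g]-\mu\int g'g\,x^{\mu-1}dx$, the two $\int g'g\,x^{\mu-1}dx$ contributions cancel, and the net coefficient of $P_0[g]$ is $b-\tfrac{\mu+1}{2}=-c$. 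All boundary terms that arise (values of $(g')^2x^{\mu+1}$, $g^2x^{\mu}$, $g'g\,x^{\mu}$, $(g')^2x^{\mu}$ at $0$ and at $\infty$) vanish: at infinity by \eqref{boundary_f}, at the origin because $\mu>0$ and $g,g'$ are bounded there; crucially no boundary value of $g^2x^{\mu-1}$ at $0$ is ever needed. The particular coefficients $\mu$ and $\mu-b$ are forced by exactly these cancellations.

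\emph{The inequality and its equality case.} Granting the identities, Cauchy--Schwarz in $L^2(w\,dx)$ gives
\[
\begin{aligned}
 c^2\,P_0[g]^2 &=\Bigl\langle\, g''+\tfrac{\mu}{x}g'\,,\,g'+\tfrac{\mu-b}{x}g\,\Bigr\rangle_{L^2(w\,dx)}^2 \\
 &\le\Bigl\|\,g''+\tfrac{\mu}{x}g'\,\Bigr\|_{L^2(w\,dx)}^2\,\Bigl\|\,g'+\tfrac{\mu-b}{x}g\,\Bigr\|_{L^2(w\,dx)}^2=Q[g]\,P_1[g],
\end{aligned}
\]
which is \eqref{CKNe}. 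As $Q[g]$, $P_0[g]=\int_0^\infty(g')^2x^{\mu}dx$ and $P_1[g]=\int_0^\infty(g')^2x^{\mu+1}dx-\varepsilon\int_0^\infty g^2x^{\mu-1}dx$ are continuous with respect to $\|\cdot\|_{\mathcal{D}_\mu}$, the inequality extends from smooth $g$ to all of $\mathcal{D}_\mu(\mathbb{R}_+)$ by density; since $P_0[g]>0$ for every $g\in\mathcal{D}_\mu(\mathbb{R}_+)\setminus\{0\}$ (a nonzero constant has $\int_0^\infty g^2x^{\mu-1}dx=\infty$), we get $R[g]\ge c^2$. Equality in Cauchy--Schwarz forces $g''+\tfrac{\mu}{x}g'=\kappa\bigl(g'+\tfrac{\mu-b}{x}g\bigr)$ for a constant $\kappa$, i.e.\ $xg''+(\mu-\kappa x)g'-\kappa(\mu-b)g=0$. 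For $\kappa\neq0$ the substitution $z=\kappa x$ turns this into Kummer's equation $zY''+(\mu-z)Y'-(\mu-b)Y=0$, whose solution regular at $z=0$ is ${}_1F_1(\mu-b,\mu,z)=e^{z}{}_1F_1(b,\mu,-z)$ by Kummer's transformation. The case $\kappa=0$ forces $(x^\mu g')'=0$, hence $g$ constant (excluded); the case $\kappa>0$ gives $g(x)\sim e^{\kappa x}(\kappa x)^{-b}$ as $x\to\infty$, which violates \eqref{boundary_f}; and the second solution of Kummer's equation behaves like $x^{1-\mu}$ near the origin, so its derivative is unbounded there, again violating \eqref{boundary_f}. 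Hence $\kappa=-\lambda<0$ and $g=C e^{-\lambda x}{}_1F_1(b,\mu,\lambda x)$, exactly the family asserted.

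\emph{Sharpness.} Finally one checks that $g_\lambda(x):=e^{-\lambda x}{}_1F_1(b,\mu,\lambda x)$ indeed lies in $\mathcal{D}_\mu(\mathbb{R}_+)\setminus\{0\}$, so that it realizes equality and $c^2$ is optimal. It is smooth at the origin with $g_\lambda(0)=1$, and from ${}_1F_1(b,\mu,z)\sim\frac{\Gamma(\mu)}{\Gamma(b)}e^{z}z^{\,b-\mu}$ one obtains $g_\lambda(x)\sim\mathrm{const}\cdot x^{\,b-\mu}$ as $x\to\infty$; since $b<\mu/2$, the integrals $\int_0^\infty(g_\lambda'')^2x^{\mu+1}dx$, $\int_0^\infty(g_\lambda')^2(x+1)x^{\mu}dx$ and $\int_0^\infty g_\lambda^2x^{\mu-1}dx$ all converge, the limits in \eqref{boundary_f} are $0$, and $P_0[g_\lambda]>0$ because $g_\lambda$ is non-constant. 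Thus the best constant in \eqref{CKNe} is $\tfrac14\bigl(1+\sqrt{\mu^2-4\varepsilon}\bigr)^2$ and it is attained by this family. (If, in addition, one wants the infimum of $R$ over $C_c^\infty(\mathbb{R}_+)$ as used in \eqref{C_PNn}, truncate $g_\lambda$ smoothly near $0$ and near $\infty$; since $g_\lambda,g_\lambda'$ are bounded at the origin and $g_\lambda$ decays polynomially at infinity, the truncation changes $R$ by $o(1)$.)

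\emph{Main obstacle.} The single genuinely difficult point is the discovery of the pairing above: that $g''+\tfrac{\mu}{x}g'$ and $g'+\tfrac{\mu-b}{x}g$ are the right factors, with $L^2(w\,dx)$ norms collapsing to $Q[g]$ and $P_1[g]$ and $L^2(w\,dx)$ inner product collapsing to $-c\,P_0[g]$. This is the ``simple calculation method for the $L^2$ integral of Kummer's equation'' referred to in the introduction; as the author notes, it surfaced only after the long Laguerre-polynomial computation of $R[g]$ in the earlier preprint. Everything else --- the two integrations by parts, one Cauchy--Schwarz step, and the Kummer-equation analysis of the equality case --- is routine, the only care being the vanishing of boundary terms (from \eqref{boundary_f} and $\mu>0$) and the exclusion of the degenerate branches $\kappa\ge0$ and of the solution singular at the origin.
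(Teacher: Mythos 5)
Your proof is correct, and its computational core coincides with the paper's: the two factors $g''+\tfrac{\mu}{x}g'$ and $g'+\tfrac{\mu-b}{x}g$ that you pair by Cauchy--Schwarz in $L^2(x^{\mu+1}dx)$ are exactly the two halves of the paper's quadratic functional $I[g]=\int_0^\infty\bigl(xg''+(x+\mu)g'+(\mu-b)g\bigr)^2x^{\mu-1}dx$, and your three integration-by-parts identities are precisely the diagonal and cross terms of the paper's single identity $I[g]=Q[g]+P_1[g]-\bigl(\sqrt{\mu^2-4\varepsilon}+1\bigr)P_0[g]$. The organization differs in one genuine way: the paper first reduces the multiplicative quotient $R[g]=Q[g]P_1[g]/P_0[g]^2$ to the additive quotient $\widetilde{R}[g]=(Q[g]+P_1[g])/P_0[g]$ by optimizing over the dilation $g(x)\mapsto g(\lambda x)$ (its Step~1) and only then invokes $I[g]\ge 0$, whereas your direct Cauchy--Schwarz between the two factors absorbs that scaling step; this is also why your equality analysis yields the full one-parameter family $Ce^{-\lambda x}{}_1F_1(b,\mu,\lambda x)$ in one stroke, via the free proportionality constant $\kappa=-\lambda<0$, instead of first obtaining the single Kummer solution at $\lambda=1$ and then rescaling. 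Your version is marginally shorter and is more explicit about the excluded branches ($\kappa\ge 0$, the second Kummer solution singular at the origin, the constant and pure power-law cases) and about why $e^{-\lambda x}{}_1F_1(b,\mu,\lambda x)$ actually belongs to $\mathcal{D}_\mu(\mathbb{R}_+)$, a point the paper asserts without the asymptotics; the paper's additive formulation, on the other hand, is what it reuses when relating minimizers of $\widetilde{R}$ to those of $R$. The boundary-term bookkeeping via \eqref{boundary_f} and the density extension to $\mathcal{D}_\mu(\mathbb{R}_+)$ match the paper's treatment, so I see no gap.
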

\begin{proof}
{\it Step 1}. \red{%In the same way as in \eqref{Rg} and \eqref{P0P2Q}, 
Let us apply to $g\in \mathcal{D}_\mu(\mathbb{R}_+)\setminus\{0\}$ the same notations in \eqref{Rg} and \eqref{P0P2Q}, and we additionally set\[  %\quad\text{ and additionally} \quad
 \widetilde{R}[g]=\dfrac{Q[g]+P_1[g]}{P_0[g]}.\]
%as well as  $R[g]=Q[g]P_1[g]/(P_0[g])^2$. %where $P_0[g]$, $P_1[g]$ and $Q[g]$ are the same integrals as in \eqref{P0P2Q} that are well defined for all .
% \begin{equation} \begin{aligned} & Q[g]= \int_0^\infty(g'')^2	{x}^{\mu+1}d{x}, \quad P_0[g]=\int_0^\infty (g')^2{x}^\mu d{x}, \\& P_1[g]=\int_0^\infty\Big(	{x}^2(g')^2-{\varepsilon} g^2\Big){x}^{\mu-1}d{x}. \end{aligned} \end{equation}
Here we show that the minimization problem for $R[g]=Q[g]P_1[g]/(P_0[g])^2$ can be reduced to that of $\widetilde{R}[g]$; the author of the present paper knew this idea for the first time in the recent paper \cite{CKN_second} by Duong-Nguyen. 
%One goal is to show that \[ \tfrac{1}{4}\inf_{g\in\mathcal{D}_\mu}\(\widetilde{R}[g]\)^2= \inf_{g\in \mathcal{D}_\mu}R[g]=c_{\mu,\varepsilon} \]
To this end, first of all let us check that
 \begin{equation}
  \tfrac{1}{4}\(\widetilde{{R}}[{g}]\)^2\ge {R}[{g}]%
 \ge \tfrac{1}{4}\inf_{{g}\in\mathcal{D}_\mu}\(\widetilde{R}[{g}]\)^2,%
 \label{RRR}
 \end{equation}
where and hereafter the abbreviation $g\in\mathcal{D}_\mu$ means $g\in\mathcal{D}_\mu(\mathbb{R}_+)\setminus\{0\}$. 
 The first inequality in \eqref{RRR} is easy to verify; to check the second,  
 notice that $R[g]$ is invariant under the scaling transformation \[{g}({ x})\mapsto {g}_\lambda({ x})={g}(\lambda { x})\] 
 for any $\lambda>0$. 
 By the change of variables ${ y}=\lambda { x}$, we have $\frac{\partial}{\partial x_k}{g}_\lambda({ x})=\lambda \frac{\partial}{\partial y_k}{g}({ y})$, which leads to
 \[
 \begin{split}
  \widetilde{R}[{g}_\lambda]
 &%
 =\frac{\lambda^2 Q[g]
 +\lambda^{-2}P_1[g]}
 {P_0[g]}.
 \end{split}
 \]
 Choosing $\lambda=\(\dfrac{P_1[g]}{Q[g]}\)^{1/4}$, we get $\widetilde{R}[{g}_\lambda]=2\sqrt{{R}[{g}]}$ or equivalently 
 \begin{equation}
 R[g]=\tfrac{1}{4}\(\widetilde{R}[g_\lambda]\)^2.\label{RR}
 \end{equation}
 Combining this equation with $\(\widetilde{R}[g_\lambda]\)^2\ge \displaystyle\inf_{g\in\mathcal{D}_\mu}\(\widetilde{R}[g]\)^2$, we get the second inequality of \eqref{RRR} as desired. Taking the infimum on both sides of \eqref{RRR} then yields
 \begin{equation}
 \inf_{g\in\mathcal{D}_\mu}R[g]=\tfrac{1}{4}\inf_{g\in\mathcal{D}_\mu}\(\widetilde{R}[g]\)^2.
 \label{infR}
 \end{equation}
 Next, we derive the relation between the minimizers of $R[g]$ and $\widetilde{R}[g]$. Assume that $g_0\in\mathcal{D}_\mu$ is a minimizer of $R[g]$:
 \[
 R[g_0]=\tfrac{1}{4}\inf_{g\in\mathcal{D}_\mu} R[g].
 \]
Then, noticing  from \eqref{RR} that $R[g_0]=\tfrac{1}{4}\(\widetilde{R}[(g_0)_\lambda]\)^2$ for some $\lambda>0$, we find from \eqref{infR} that
 \[
 \tfrac{1}{4}\(\widetilde{R}[(g_0)_\lambda]\)^2 =\tfrac{1}{4}\inf_{g\in\mathcal{D}_\mu}\(\widetilde{R}[g]\)^2,
 \]
 whence
 \[
 \widetilde{R}[(g_0)_\lambda]= \inf_{g\in\mathcal{D}_\mu}\widetilde{R}[g].
 \]
In other words, we conclude that, if ${g}_0\in\mathcal{D}_\mu$ is a minimizer of $R[g]$ then $(g_0)_\lambda$ is that of $\widetilde{R}[g]$ for {\it some} $\lambda>0$. Conversely, if ${g}_0\in\mathcal{D}_\mu$ is a minimizer of $\widetilde{R}[g]$ then  $(g_0)_\lambda$ is that of $R[g]$ for {\it all} $\lambda>0$. }

\red{{\it Step 2.}  We start  with the nonnegative functional
 \begin{equation}
  I[{g}]:=\int_0^\infty\Big(x{g}''+(x+\mu){g}'+(\mu-b){g}\Big)^2x^{\mu-1}dx
 \label{I[g]}
 \end{equation}
for ${g}\in C_\mu^\infty(\mathbb{R}_+)$. 
 By expanding the integrand, let $I[{g}]$ split into %
 \[
  I[{g}]=I_0[{g}]+I_1[{g}],
 \]
 where
 \[
 \begin{aligned}
  I_0[{g}]&:=\int_0^\infty\(
 x^2\({g}''\)^2+(x+\mu)^2\(g'\)^2+(\mu-b)^2{g}^2\)x^{\mu-1}dx,
 \\
 I_1[{g}]&: 
 =2\int_0^\infty\(
  x(x+\mu){g}''g'
 +(\mu-b)\Big(x{g}'' {g}+(x+\mu){g}'{g}\Big)
 \)x^{\mu-1}dx.
 \end{aligned}
 \]
 To compute $I_1[g]$ by integration by parts, we have
 \[
 \begin{split}
 I_1[{g}]%
 &=\int_0^\infty
 \(
 \begin{aligned}
 & \(x^{\mu+1}+\mu x^\mu\)\((g')^2\)'
 \\&+(\mu-b)
 \(2\:\! x^{\mu}g''g+\(x^\mu+\mu x^{\mu-1}\)\(g^2\)'\)
 \end{aligned}
 \)dx
 \\&=\int_0^\infty \frac{d}{dx}\(\(x^{\mu+1}+\mu x^\mu\)(g')^2+(\mu-b)\(
 \begin{aligned}
 2\:\! x^\mu g'g-\mu x^{\mu-1}g^2
 \\
 +(x^\mu+\mu x^{\mu-1})g^2
 \end{aligned}
 \)\)dx
 \\&\quad +\int_0^\infty
 \(
 \begin{aligned}
 &-\((\mu+1)x^\mu+\mu^2x^{\mu-1}\)(g')^2
 \\&
 +(\mu-b)\(
 \begin{aligned}
 &\mu(\mu-1)x^{\mu-2}g^2-2(g')^2x^\mu
 \\&-\(\mu x^{\mu-1}+\mu(\mu-1)x^{\mu-2}\)g^2
 \end{aligned}
 \)
 \end{aligned}\)dx
 \\&=%
 \int_0^\infty
 \Big(
 \((2b-3\mu-1)x-\mu^2\)(g')^2-(\mu-b)\mu g^2
 \Big)x^{\mu-1}dx,
 \end{split}
 \]
 where the second equality follows with the aid of the identity
 \[
 \begin{aligned}
 x^\mu {g}''{g}
 &=\frac{d}{dx}\(x^\mu g'g-\frac{\mu}{2}x^{\mu-1}{g}^2\)+ \(\frac{\mu(\mu-1)}{2\:\!x^2}{g}^2-({g}')^2\)x^\mu 
 \end{aligned}
 \]
 and the third follows from the condition \eqref{boundary_f} together with the cancellation of the terms $\mu x^{\mu-1}{g}^2$.  
 Then we get
 \[
 \begin{aligned}
  I_0[g]+I_1[g]
 &=\int_0^\infty\(
 \begin{aligned}
 &x^2(g'')^2+\((x+\mu)^2+(2b-3\mu-1)x-\mu^2\)(g')^2
 \\&+\((\mu-b)^2-(\mu-b)\mu\) g^2
 \end{aligned}
 \)x^{\mu-1}dx
 \\&=\int_0^\infty
 \Big(\begin{aligned}
   x^2(g'')^2+\(x^2-(\mu+1-2b)x\)(g')^2-b(\mu-b)g^2
  \end{aligned}
 \Big)x^{\mu-1}dx.
 \\&=\int_0^\infty (g'')^2x^{\mu+1}dx+\int_0^\infty\(x^2(g')^2-\varepsilon g^2\)x^{\mu-1}dx
  \\&\quad -\(\sqrt{\mu^2-4\:\!\varepsilon}+1\)\int_0^\infty (g')^2x^\mu dx
 \end{aligned}
 \]
 from the definition of $b$. 
 Therefore, we have obtained the identity
 \begin{align}
 I[g]
 = Q[g]+P_1[g]-c P_0[g]\qquad
 \text{with}\quad 
c:=\sqrt{\mu^2-4\:\!\varepsilon}+1
 \label{identity_I}
 \end{align}
 for all ${g}\in C_\mu^\infty(\mathbb{R}_+)$;  the same also applies to $g\in\mathcal{D}_\mu(\mathbb{R}_+)$ by the density argument. In particular, it is clear from the non-negativity of \eqref{I[g]} that 
 \begin{equation}
  Q[g]+P_1[g]\ge cP_0[g]
   \qquad\text{ or }\ 
   \widetilde{R}[g]\ge c
\label{Q+P1>cP0}
 \end{equation} 
 for all ${g}\in \mathcal{D}_\mu(\mathbb{R}_+)$, where the equality holds if and only if $I[g]=0$ or equivalently
 \begin{equation}
  x{g}''+(x+\mu){g}'+(\mu-b){g}=0
 \qquad\text{on}\ \mathbb{R}_+.
 \label{ODE_f}
 \end{equation}
In order to specify the equality condition for \eqref{Q+P1>cP0} or to solve the equation \eqref{ODE_f},  let us take the transformation
\begin{equation}
g(x)=e^{-x} w(x)
\label{g=ew}
\end{equation}
which changes \eqref{ODE_f} into
 \[ \begin{aligned}
  x&(e^{-x} w)''+(x+\mu)(e^{-x} w)'+(\mu-b)e^{-x} w
  \\&=x\(e^{-x}w-2e^{-x}w'+e^{-x}w''\)+(x+\mu)\(-e^{-x} w+e^{-x}w'\)+(\mu-b)e^{-x} w
  \\&=e^{-x}\(xw''+(\mu-x)w'-b w\)=0.
 \end{aligned}
 \]
Then we get the Kummer equation  
 \[
 xw''+(\mu-x)w'-b w=0
 \]
 which is known to have the two independent solutions (see e.g. \cite{Abramowitz1972}): % $w_0$ and $w_1$, where
\[
 \varphi(x)={}_1F_1(b,\mu,x)\quad\text{ and }\quad
\psi(x)=x^{1-\mu}{}_1F_1(b+1-\mu,\,2-\mu,\,x).
\]
Notice that the function $e^{-x}\varphi(x)$ belongs to $\mathcal{D}_\mu(\mathbb{R}_+)$, while $e^{-x}\psi(x)$ does not since 
 $\int_0^\infty\((e^{-x}\psi(x))''\)^2x^{\mu+1}dx=\infty$ from  $\psi''(x)=O(x^{-1-\mu})$ as $x\to0$. Therefore, $e^{-x}\varphi(x)$ is the unique solution to \eqref{ODE_f} (up to constant multiplication) when restricted to the space $\mathcal{D}_\mu(\mathbb{R}_+)$.  
 By recalling Step 1, we have obtained that
 \begin{equation}
 \tfrac{1}{4}\min_{g\in\mathcal{D}_\mu}\(\widetilde{R}[g]\)^2=\min_{g\in\mathcal{D}_\mu} R[g]=c^2/4,
 \label{est_R}
 \end{equation}
 and that all the minimizers of $\widetilde{R}[g]$ are given by the set
\[
 \left\{Ce^{-x}{}_1F_1(b,\mu,x)\ ;\ C\in\mathbb{R}\right\}
\]
which is also a subset of the minimizers of $R[g]$.
%must be proportional to $e^{-x}{}_1F_1(b,\mu,x)$.
}

\red{
 Finally, %in view of the equation \[ R[g_0]=\min_{g\in\mathcal{D}_\mu}R[g],  \]
let us specify the minimizers $g_0$ of $R[g]$. 
 According to the conclusion of Step 2, there exists some $\lambda>0$ such that $(g_0)_\lambda(x)={g}_0(\lambda x)$ is a minimizer of $\widetilde{R}[g]$. Then it follows from the above discussion that 
 \[
 (g_0)_\lambda(x)= {g}_0(\lambda x)\propto e^{-x}{}_1F_1(b,\mu,x)\qquad\text{for some }\lambda>0
 \]
 or equivalently that %which implies
 \[
 g_0(x)\propto e^{-\lambda x}{}_1F_1(b,\mu,\lambda x)
 \qquad\text{for some }\lambda>0.
 \]
Therefore, it turns out that the set of  all minimizers of $R[g]$ is given by
 \[
 \left\{C e^{\lambda x} {}_1F_1(b,\mu,\lambda x)\ ; \ C\in\mathbb{R},\ \lambda>0\right\}.
 \]
 The proof of the theorem is now complete.}
\end{proof}

\red{For later use, the following fact provides a sufficient condition ensuring that both the two cases $g\in C_c^\infty(\mathbb{R}_+)$ and $g\in\mathcal{D}_\mu(\mathbb{R}_+)$ give the same infimum value of $R[g]$:}
\begin{lemma}
\label{lemma:density_1}
If $\mu>2$, then $C_c^\infty(\mathbb{R}_+)$ is dense in $\mathcal{D}_\mu(\mathbb{R}_+)$.
\end{lemma}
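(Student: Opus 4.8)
We must show that, for $\mu>2$, every element of $\mathcal{D}_\mu(\mathbb{R}_+)$ can be approximated in the $\|\cdot\|_{\mathcal{D}_\mu}$-norm by functions in $C_c^\infty(\mathbb{R}_+)$. Since $\mathcal{D}_\mu(\mathbb{R}_+)$ is by definition the completion of $\{g\in C_\mu^\infty(\mathbb{R}_+):\|g\|_{\mathcal{D}_\mu}<\infty\}$, it suffices to approximate an arbitrary such smooth $g$ (with finite norm and satisfying the boundary decay/growth conditions \eqref{boundary_f}) by compactly supported smooth functions. The strategy is the standard two-step truncation: first cut off near $x=\infty$, then cut off near $x=0$, controlling the error in each of the three integrals defining $\|\cdot\|_{\mathcal{D}_\mu}^2$ — namely $\int (g'')^2x^{\mu+1}$, $\int (g')^2(x+1)x^\mu$, and $\int g^2 x^{\mu-1}$.

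\textbf{Step 1: truncation at infinity.} Fix $\zeta\in C_c^\infty([0,\infty))$ with $\zeta\equiv 1$ on $[0,1]$ and $\mathrm{supp}\,\zeta\subset[0,2]$, and set $\zeta_R(x)=\zeta(x/R)$. Then $g_R:=\zeta_R g$ has support in $[0,2R]$. By the Leibniz rule, $g_R''-\zeta_R g''$ consists of terms involving $\zeta_R'g'$ and $\zeta_R''g$, with $|\zeta_R'|=O(R^{-1})$, $|\zeta_R''|=O(R^{-2})$, both supported in $[R,2R]$. One estimates, e.g., $\int_R^{2R}(\zeta_R'')^2g^2x^{\mu+1}dx=O(R^{-4})\int_R^{2R}g^2x^{\mu+1}dx$; since on $[R,2R]$ we have $x^{\mu+1}\le (2R)^2\cdot x^{\mu-1}$, this is $O(R^{-2})\int_R^{2R}g^2x^{\mu-1}dx\to 0$ because $\int_0^\infty g^2 x^{\mu-1}dx<\infty$. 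The cross term $\int(\zeta_R')^2(g')^2x^{\mu+1}dx=O(R^{-2})\int_R^{2R}(g')^2x^{\mu+1}dx$ is likewise controlled by $\int (g')^2x^{\mu-1+2}dx$, which is finite (it is essentially $P_1$-type, finite on $\mathcal{D}_\mu$). Combined with $\int_R^\infty$-tails of $\zeta_R g''$, $\zeta_R g'$, $\zeta_R g$ going to zero by dominated convergence, we get $\|g_R-g\|_{\mathcal{D}_\mu}\to 0$.

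\textbf{Step 2: truncation near the origin — this is where $\mu>2$ enters.} Now $g$ is supported in $[0,2R]$ with finite norm, and we remove a neighborhood of $0$. Here the bounds $|g|,|g'|=O(1)$ from \eqref{boundary_f} give $|g''|=O(x^{-1})$ only in the worst case, so care is needed. Take $\eta_\delta(x)=\eta(x/\delta)$ with $\eta\equiv 0$ on $[0,1]$, $\eta\equiv 1$ on $[2,\infty)$, and consider $\eta_\delta g$. The dangerous error terms are $\int_\delta^{2\delta}(\eta_\delta'')^2 g^2 x^{\mu+1}dx = O(\delta^{-4})\int_\delta^{2\delta}g^2x^{\mu+1}dx = O(\delta^{-4})\cdot O(1)\cdot O(\delta^{\mu+2}) = O(\delta^{\mu-2})$, and $\int_\delta^{2\delta}(\eta_\delta')^2(g')^2x^{\mu+1}dx=O(\delta^{-2})\cdot O(\delta^{\mu+2})=O(\delta^{\mu})$; for the $(g')^2(x+1)x^\mu$-integral, $\int_\delta^{2\delta}(\eta_\delta')^2 g^2 x^\mu dx=O(\delta^{-2})\cdot O(\delta^{\mu+1})=O(\delta^{\mu-1})$; and $\int_0^{2\delta}g^2 x^{\mu-1}dx=O(\delta^\mu)$. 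All these vanish as $\delta\to 0$ \emph{precisely because $\mu>2$} (the binding exponent being $\mu-2>0$). The remaining pieces $\int_0^{2\delta}(\cdots)$ of $\zeta$-untouched integrands of $g$ go to zero by absolute continuity of the integral.

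\textbf{Main obstacle.} The only genuinely delicate point is the second-derivative error term at the origin, $\int_\delta^{2\delta}(\eta_\delta'')^2g^2x^{\mu+1}dx$, whose natural bound is $O(\delta^{\mu-2})$; this is exactly why the hypothesis $\mu>2$ cannot be dropped with this crude cutoff. One could also worry whether the diagonal term $\int\eta_\delta^2(g'')^2x^{\mu+1}dx\to\int(g'')^2x^{\mu+1}dx$, but that is immediate from monotone/dominated convergence since this integral is finite by hypothesis. After both truncations, $\eta_\delta g_R\in C_c^\infty(\mathbb{R}_+)$ and a triangle-inequality argument ($\|\eta_\delta g_R - g\|_{\mathcal{D}_\mu}\le\|\eta_\delta g_R-\eta_\delta g\|_{\mathcal{D}_\mu}+\|\eta_\delta g-g\|_{\mathcal{D}_\mu}$, choosing $R$ large then $\delta$ small) finishes the proof.
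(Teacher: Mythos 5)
Your proof is correct and follows essentially the same route as the paper's: a smooth cutoff argument whose only delicate error term is the one carrying the second derivative of the cutoff near the origin, which is exactly where $\mu>2$ enters. The paper merely packages your two truncations into a single logarithmic cutoff $\zeta_n(x)=\zeta_0\big(\tfrac{1}{n}\log x\big)$, killing both ends at once with errors $O(n^{-2})\int_0^\infty\big(g^2/x^2+(g')^2\big)x^{\mu-1}dx$, finite precisely when $\mu>2$ — the same power counting as your $O(\delta^{\mu-2})$.
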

\begin{proof}
Let $g=g(x)\in \mathcal{D}_\mu(\mathbb{R}_+)\cap C_\mu^\infty(\mathbb{R}_+)$ and
let $\zeta_0=\zeta_0(x)\in C_c^\infty(\mathbb{R})$ with $\zeta_0(0)=1$. Define $\{g_n\}_{n\in\mathbb{N}}\subset C_c^\infty(\mathbb{R}_+)$ by the formula
\[
 g_n(x)=\zeta_n(x)g(x),\quad\text{where}\quad \zeta_n(x)=\zeta_0\(\tfrac{1}{n}\log x\)
\quad(\forall n\in\mathbb{N},\ \forall x>0).
\]
In order to see that $g_n\to g$ in $\mathcal{D}_\mu(\mathbb{R}_+)$,  a direct calculation yields
\[
\begin{aligned}
 & \red{{g}'_n}-\zeta_n \red{{g}'}=(\partial\zeta_n)\frac{g}{x},
\\
&\red{g''_n}-\zeta_n\red{{g}''}=(\partial^2\zeta_n-\partial\zeta_n)\frac{g}{x^2}+2(\partial\zeta_n) \frac{\red{{g}'}}{x},
\end{aligned}
\]
\red{where $\partial=x \frac{d}{dx}$ as  in \eqref{abb}}. 
Taking the $L^2$ integral of them yields
\[
\begin{aligned}
& \int_0^\infty\(\red{{g}_n'}-\zeta_n\red{{g}'}\)^2(x+1)x^\mu dx=O(n^{-2})\int_{0}^\infty g^2\(1+\frac{1}{x}\)x^{\mu-1} dx,
\\
& \int_0^\infty\(g_n''-\zeta_n \red{g''}\)^2x^{\mu+1}dx=O(n^{-2})\int_0^\infty\(\frac{g^2}{x^2}+(\red{{g}'})^2\)x^{\mu-1}dx.
\end{aligned}
\]
Notice from \eqref{boundary_f} that $g(x)$ is bounded near $x=0$; then it follows from $\|g\|_{\mathcal{D}_\mu}<\infty$ that both the integrals on the right-hand sides are finite as far as $\mu>2$. Since the convergences $\zeta_n\red{{g}'}\to \red{{g}'}$ and $\zeta_n\red{{g}''}\to \red{g''}$ are dominated, by way of the $L^2$ triangle inequality, we consequently get
\[
 \begin{aligned}
  &\int_0^\infty(\red{{g}'_n}-\red{g'})^2(x+1)x^\mu dx\to 0
\\
\text{and}\quad & \int_0^\infty\(\red{{g}''_n}- \red{g''}\)^2 x^{\mu+1}dx\to0
 \end{aligned}
\]
as well as $\int_0^\infty \(g_n-g\)^2x^{\mu-1}dx\to0$ $(n\to\infty)$. 
This fact means $\|g_n-g\|_{\mathcal{D}_\mu}\to0$, as desired.
\end{proof}

\section{Conclusion}
\label{sec:concl}
For every $\nu\in\mathbb{N}$, we set
\[
\mu_\nu= \frac{N}{2}+\nu
\quad\text{ and }\quad
\varepsilon_\nu=\frac{\nu+N-2}{2}.
\]
This setting is the same as in \eqref{mu_and_ep} with $(\mu,\varepsilon)=(\mu_\nu,\varepsilon_\nu)$, which clearly satisfies both the conditions $\mu>2$ in Lemma~\ref{lemma:density_1} and \red{$0<\varepsilon<\mu^2/4$} in Theorem~\ref{theorem:3}.  By using their results, it holds from \eqref{C_PNn} that
\begin{align}
 C_{P,N,\nu}&=\red{4}\inf_{g\in C_c^\infty(\mathbb{R}_+)}R[g]=\red{4}\inf_{g\in\mathcal{D}_\mu} R[g]%=\(\sqrt{\mu^2-4\:\!\varepsilon}+1\)^2
=\(\sqrt{\mu_\nu^2-4\:\!\varepsilon_\nu}+1\)^2
\\&=\mfrac{1}{4}\(\sqrt{(2\nu+N-2)^2-4(N-3)}+2\)^2,
\end{align}
from which we directly find that $C_{P,N,\nu+1}>C_{P,N,\nu}$ for all $\nu\in\mathbb{N}$. Then we get from \eqref{C_PN} that
\[
 C_{P,N}=C_{P,N,1}=\(\sqrt{\mu_1^2-4\:\!\varepsilon_1}+1\)^2=\mfrac{1}{4}\(\sqrt{N^2-4(N-3)}+2\)^2.
\]
Comparing this constant with that in Theorem~\ref{theorem:tor}, we see from \eqref{C_N} that 
\begin{align}
 C_N&=\min\left\{\tfrac{1}{4}\(\sqrt{N^2-4(N-3)}+2\)^2, \ \tfrac{1}{4}\(N+2\)^2\right\}
\\&=\mfrac{1}{4}\(\sqrt{N^2-4(N-3)}+2\)^2
=\begin{cases}
C_{P,3}=C_{T,3}&(N=3)
\\
 C_{P,N}<C_{T,N} &(N\ge4)
 \end{cases},
\label{CN}
\end{align}
which gives the desired best HUP constant in Theorem~\ref{theorem}. 

For the completeness of the proof of Theorem~\ref{theorem}, it remains to specify the profile(s) of extremal solenoidal fields which attain the best HUP constant. To this end, let ${\bm u}_0\in  C^\infty(\dot{\mathbb{R}}^N)^N$ be the poloidal field of the form 
\begin{equation}
 {\bm u}_0({\bm x})={\bm D}\(\frac{{\bm c}\cdot {\bm x}}{|{\bm x}|}g_0(|{\bm x}|^2)\)
\label{extremal_pol}
\end{equation}
with ${\bm c}\in\mathbb{R}^N\setminus\{{\bm 0}\}$ being any constant vector (which serves as the axis of symmetry of ${\bm u}_0$),
where $g_0\in \mathcal{D}_{\mu_1}(\mathbb{R}_+)$ is given by
\begin{align}
g_0({s})&=\red{e^{-\lambda {s}}{}_1F_1(b_1,{\mu_1},\lambda{s})}
\qquad\text{with } \quad 
b_1=\frac{\mu_1-\sqrt{\mu_1^2-4{\varepsilon_1} }}{2}\quad\text{and}\quad\red{\lambda>0}
\end{align}
as in \eqref{1F1} %or \eqref{g0_1F1} %Theorem~\ref{theorem:3} 
with $(b,\mu,\varepsilon)=(b_1,\mu_1,\varepsilon_1)$. 
We have to check that
\[
{\bm u}_0\in\mathcal{D}(\mathbb{R}^N)
\quad\text{ and }\quad 
\frac{\int_{\mathbb{R}^N}|\nabla {\bm u}_0|^2dx \int_{\mathbb{R}^N}|{\bm u}_0|^2|{\bm x}|^2dx}{\(\int_{\mathbb{R}^N}|{\bm u}_0|^2dx\)^2}
= C_N.
\] 
To do so, define the linear operator ${\bm D}_1:C^\infty(\mathbb{R}_+)\to C^\infty(\dot{\mathbb{R}}^N)^N%{\bm D}\(\mathcal{E}_1(\dot{\mathbb{R}}^N)\)
$ by
\[
 \({\bm D}_1 g\)({\bm x})={\bm D}\(\frac{{\bm c}\cdot {\bm x}}{|{\bm x}|}g(|{\bm x}|^2)\)\qquad \forall g\in C^\infty(\mathbb{R}_+),%
\]
in order that ${\bm D}_1g_0={\bm u}_0$. 
Applying \eqref{f=gY} and \eqref{Df_Rg} to the case $\nu=1$ and $Y({\bm \sigma})={\bm c}\cdot {\bm \sigma}$,  we then see that the  equations
\begin{equation}
\left.\begin{gathered}
c\int_{\mathbb{R}^N}|{\bm D}_1g|^2|{\bm x}|^{2\beta}=P_\beta [g],\quad  c\int_{\mathbb{R}^N}|\nabla {\bm D}_1g|^2dx=\red{4}\:\!Q[g]
\\\text{and}\quad  \frac{\int_{\mathbb{R}^N}|\nabla {\bm D}_1 g|^2dx \int_{\mathbb{R}^N}|{\bm D}_1 g|^2|{\bm x}|^2dx}{\(\int_{\mathbb{R}^N}|{\bm D}_1 g|^2dx\)^2}=\red{4}R[g]
\end{gathered}
\ \right\}\
\forall g\in C_c^\infty(\mathbb{R}_+)
\label{D1_PQR}
 \end{equation}
hold with $c>0$ depending only on $N$, 
where $P_\beta[g]$, $Q[g]$ and $R[g]$ are the same as in \eqref{P0P2Q} and \eqref{Rg}  with $(\mu,\varepsilon)=(\mu_1,\varepsilon_1)$. In particular, we get
\[
c\|{\bm D}_1 g \|_{\mathcal{D}}^2=P_0[g]+P_1[g]+\red{4}\;\! Q[g]\le \|g\|_{\mathcal{D}_{\mu_1}}^2\qquad \forall g\in C_c^\infty(\mathbb{R}_+)
\]
in terms of the notations \eqref{norm_D} and \eqref{norm_Dm}. 
Therefore, by way of the density arguments in Lemmas \ref{lemma:density} and \ref{lemma:density_1}, it holds that
\[
 c\|{\bm D}_1 g\|_{\mathcal{D}}\le \|g\|_{\mathcal{D}_{\mu_1}}\qquad \forall g\in\mathcal{D}_{\mu_1}(\mathbb{R}_+)\cap C^\infty(\mathbb{R}_+),
\]
which says that ${\bm D}_1$ is continuous on $C^\infty(\mathbb{R}_+)$ from the topology of $\mathcal{D}_{\mu_1}(\mathbb{R}_+)$ to $\mathcal{D}(\mathbb{R}^N)$. Hence the same formulae in \eqref{D1_PQR} are still applicable to 
\[\forall g\in\mathcal{D}_{\mu_1}(\mathbb{R}_+)\cap C^\infty(\mathbb{R}_+);\]
in particular, by letting $g=g_0$, we obtain $\|{\bm u}_0\|_{\mathcal{D}}=\|{\bm D}_1g_0\|_{\mathcal{D}}<\infty$ and
\[\frac{\int_{\mathbb{R}^N}|\nabla {\bm u}_0|^2dx\int_{\mathbb{R}^N}|{\bm u}_0|^2|{\bm x}|^2dx}{\(\int_{\mathbb{R}^N}|{\bm u}_0|^2dx\)^2}=\red{4}R[g_0]=\(\sqrt{\mu_1^2-4\:\!\varepsilon_1}+1\)^2=C_N,\]
where the second equality holds from Theorem \ref{theorem:3} with $(\mu,\varepsilon)=(\mu_1,\varepsilon_1)$. This proves the desired result.

In view of \eqref{CN}, for the case $N=3$,  there is another profile of the extremal solenoidal fields:  
 according to Theorem~\ref{theorem:tor}, the best HUP constant $C_3$ is also achieved by the toroidal field
 \begin{equation}
  {\bm u}({\bm x})=\({\bm c}_1\cdot {\bm x},\ {\bm c}_2\cdot {\bm x}, \  {\bm c}_3\cdot {\bm x}\)e^{-c|{\bm x}|^2}
\label{extremal_tor}
 \end{equation}
 with $c>0$ and with $({\bm c}_1,{\bm c}_2,{\bm c}_3)\in\mathbb{R}^{3\times 3}$ being any antisymmetric constant $3\times 3$ matrix. 

The proof of Theorem~\ref{theorem} is now complete.
\section*{\bf Acknowledgments}
\noindent
This work is supported by JSPS KAKENHI Grant number JP21J00172. 
The author thanks to Prof. Y. Kabeya (Osaka Prefecture University) for his great support and encouragement. Additionally, this research was partly supported by JSPS KAKENHI Grant-in-Aid for Scientific Research (B) 19H01800 (F. Takahashi) and Osaka City University Advanced Mathematical Institute (MEXT Joint Usage/Research Center on Mathematics and Theoretical Physics JPMXP0619217849). %

\bibliographystyle{amsplain-it}
\bibliography{bibdata,bibmydata}

\providecommand{\bysame}{\leavevmode\hbox to3em{\hrulefill}\thinspace}
\providecommand{\MR}{\relax\ifhmode\unskip\space\fi MR }
% \MRhref is called by the amsart/book/proc definition of \MR.
\providecommand{\MRhref}[2]{%
  \href{http://www.ams.org/mathscinet-getitem?mr=#1}{#2}
}
\providecommand{\href}[2]{#2}
\begin{thebibliography}{10}

\bibitem{Backus}
G.~E. Backus, {\it A class of self-sustaining dissipative spherical dynamos},
  Annals of Physics \textbf{4} (1958), no.~4, 372--447.

\bibitem{Brezis-Vazquez}
H.~Brezis and J.~L. V{\'a}zquez, {\it Blow-up solutions of some nonlinear
  elliptic problems}, Revista Matem{\'a}tica de la Universidad Complutense de
  Madrid \textbf{10} (1997), no.~2, 443--469.

\bibitem{CFL_HUP}
C.~Cazacu, J.~Flynn, and N.~Lam, {\it Sharp second order uncertainty
  principles}, arXiv:2012.12667 [math-ph] (2020).

\bibitem{Costin-Mazya}
O.~Costin and V.~G. Maz'ya, {\it {Sharp Hardy--Leray inequality for
  axisymmetric divergence-free fields}}, Calculus of Variations and Partial
  Differential Equations \textbf{32} (2008), no.~4, 523--532.

\bibitem{CKN_second}
A.~T. Duong and V.~H. Nguyen, {\it {The sharp second order
  Caffareli-Kohn-Nirenberg inequality and stability estimates for the sharp
  second order uncertainty principle}}, arXiv:2102.01425 [math.FA] (2021).

\bibitem{Folland_UP}
G.~B. Folland and A.~Sitaram, {\it {The uncertainty principle: A mathematical
  survey}}, Journal of Fourier Analysis and Applications \textbf{3} (1997),
  no.~3, 207--238.

\bibitem{HL_pre}
N.~Hamamoto, {\it {Sharp Hardy-Leray inequality for solenoidal fields}}, OCAMI
  Preprint Series 2020.

\bibitem{RL_div}
\bysame, {\it {Sharp Rellich-Leray inequality for axisymmetric divergence-free
  vector fields}}, Calculus of Variations and Partial Differential Equations
  \textbf{58} (2019), no.~4, 149.

\bibitem{3D_NA}
\bysame, {\it {Three-dimensional sharp Hardy-Leray inequality for solenoidal
  fields}}, Nonlinear Analysis \textbf{191} (2020), 111634.

\bibitem{HL-const_AdM}
\bysame, {\it {A simpler expression for Costin--Maz'ya's constant in the
  Hardy--Leray inequality with weight}}, Archiv der Mathematik (2021).

\bibitem{RL_CVPD}
\bysame, {\it {Sharp Rellich--Leray inequality with any radial power weight for
  solenoidal fields}}, Calculus of Variations and Partial Differential
  Equations \textbf{60} (2021), no.~2, 65.

\bibitem{HUP_pre}
\bysame, {\it {Sharp Uncertainty Principle inequality for soleonidal fields}},
  arXiv:2104.02351v3 [math.CA] (2021).

\bibitem{CF-RH_pre}
N.~Hamamoto and F.~Takahashi, {\it {A curl-free improvement of the
  Rellich-Hardy inequality with weight}}, OCAMI Preprint Series 2020,
  arXiv:2101.01878 [math.AP].

\bibitem{swirl_CPAA}
\bysame, {\it {Sharp Hardy-Leray inequality for three-dimensional solenoidal
  fields with axisymmetric swirl}}, Communications on Pure \& Applied Analysis
  \textbf{19} (2020), no.~6, 3209--3222.

\bibitem{CF_MIA}
\bysame, {\it {Best constant of the critical Hardy-Leray inequality for
  curl-free fields in two dimensions}}, Mathematical Inequalities \&
  Applications \textbf{24} (2021), no.~2, 399--404.

\bibitem{CF_MAAN}
\bysame, {\it {Sharp Hardy--Leray and Rellich--Leray inequalities for curl-free
  vector fields}}, Mathematische Annalen \textbf{379} (2021), no.~1, 719--742.

\bibitem{CF_JFA}
\bysame, {\it {Sharp Hardy-Leray inequality for curl-free fields with a
  remainder term}}, {Journal of Functional Analysis} \textbf{280} (2021),
  no.~1, 108790.

\bibitem{Hardy}
G.~H. Hardy, {\it {Note on a theorem of Hilbert}}, Mathematische Zeitschrift
  \textbf{6} (1920), no.~3--4, 314--317.

\bibitem{Heisenberg_1927}
W.~Heisenberg, {\it {{\"U}ber den anschaulichen Inhalt der quantentheoretischen
  Kinematik und Mechanik}}, Zeitschrift f{\"u}r Physik \textbf{43} (1927),
  no.~3, 172--198.

\bibitem{Leray}
J.~Leray, {\it \'{E}tude de diverses \'equations int\'egrales non lin\'eaires
  et de quelques probl\`emes que pose l'hydrodynamique}, Journal de
  Math\'ematiques Pures et Appliqu\'ees \textbf{12} (1933), 1--82 (French).

\bibitem{Abramowitz1972}
{Lucy Joan Slater}, {\it {Confluent Hypergeometric Functions}}, {Handbook of
  Mathematical Functions with Formulas, Graphs, and Mathematical Tables}
  (M.~Abramowitz and I.~A. Stegun, eds.), vol.~55, 1972, pp.~503--515.

\bibitem{Mazya_75}
V.~Maz'ya, {\it {Seventy Five (Thousand) Unsolved Problems in Analysis and
  Partial Differential Equations}}, Integral Equations and Operator Theory
  \textbf{90} (2018), no.~2, Paper No. 25, 44 pp.

\bibitem{Tertikas-Z}
A.~Tertikas and N.~B. Zographopoulos, {\it {Best constants in the
  Hardy--Rellich inequalities and related improvements}}, Advances in
  Mathematics \textbf{209} (2007), no.~2, 407--459.

\bibitem{Weck}
N.~Weck, {\it The poloidal toroidal decomposition of differential forms},
  Analysis \textbf{17} (1997), no.~2-3, 265--285.

\bibitem{Weyl_1931}
H.~Weyl, {\it The theory of groups and quantum mechanics}, Dover Publications,
  Inc., New York, 1950, Translated from the second (revised) German edition by
  H. P. Robertson, Reprint of the 1931 English translation.

\end{thebibliography}
\end{document}